\documentclass[12pt, reqno]{amsart}
\usepackage{mathrsfs}
\usepackage{graphicx}
\usepackage{tikz}
\usepackage{amsmath}
\usepackage{amsfonts}
\usepackage{amssymb}
\usepackage[pagebackref]{hyperref}
\usepackage{pifont}
\hypersetup{backref, pagebackref, colorlinks=true}
\usepackage{cite}
\usepackage{color}

\def\blue{\textcolor{blue}}

\def\magenta{\textcolor{magenta}}

\def\cyan{\textcolor{cyan}}

\usepackage{utfsym}

\usepackage{epic}

\newtheorem{thm}{Theorem}[section]

\newtheorem{lem}[thm]{Lemma}
\newtheorem{prop}[thm]{Proposition}

\newtheorem{conj}[thm]{Conjecture}
\newtheorem{remark}[thm]{Remark}

\newtheorem{?}[thm]{Problem}
\newtheorem{defi}{Definition}
\newtheorem{exam}{Example}
\newtheorem{Fact}[thm]{Fact}

\usepackage{amsthm,amsmath,amssymb,url,cite,color}

\def\blue{\textcolor{blue}}
\def\magenta{\textcolor{magenta}}

\numberwithin{equation}{section}

\def\rmida{\mathrm{rmida}}

\def\lmi{\mathrm{lmi}}
\def\rmi{\mathrm{rmi}}
\def\lmidd{\mathrm{lmidd}}
\def\rmida{\mathrm{rmida}}
\def\lrmi{\mathrm{lrmi}}

\def\pk{\mathrm{pk}}

\def\drop{\mathrm{drop}}

\def\des{\mathrm{des}}

\def\lmin{\mathrm{lmi}}
\def\rmin{\mathrm{rmi}}

\def\S{\mathfrak{S}}

\def\mix{\mathrm{mix}}
\def\max{\mathrm{max}}
\def\fix{\mathrm{fix}}
\def\cyc{\mathrm{cyc}}
\def\mix{\mathrm{mix}}
\def\Web{\mathrm{Web}}
\def\Orb{\mathrm{Orb}}

\def\a{\alpha}
\def\b{\beta}

\def\Z{\mathbb Z}


\def\cyc{\mathrm{cyc}}

\def\Cr{\mathrm{Cr}}

\topmargin 0.5in
\textheight=8.7in
\textwidth=6.4in
\voffset=-.68in
\hoffset=-.68in

\newcommand\Xmarking[2]
{\draw[very thick] (#1-0.7,#2-0.7)--(#1-0.3,#2-0.3);
\draw[very thick] (#1-0.7,#2-0.3)--(#1-0.3,#2-0.7);
\draw (#1-1,#2-0.5)--(#1-0.5,#2-0.5);
\draw (#1-0.5,#2-0.5)--(#1-0.5,#2); }
\newcommand\Cross[2]
{\draw (#1-0.5,#2)--(#1-0.5,#2-1);
\draw (#1,#2-0.5)--(#1-1,#2-0.5); }
\newcommand\UP[2]{\draw (#1-0.5,#2)--(#1-0.5,#2-1);}
\newcommand\EAST[2]{\draw (#1,#2-0.5)--(#1-1,#2-0.5);}
\newcommand\Asmooth[2]
{\draw (#1,#2-0.5) .. controls (#1-0.45,#2-0.45) and (#1-0.45,#2-0.45) .. (#1-0.5,#2);
\draw (#1-1,#2-0.5) .. controls (#1-0.55,#2-0.55) and (#1-0.55,#2-0.55) .. (#1-0.5,#2-1); }

\newcommand\Matching[2]{\draw (#1,0) to [out=55,in=125] (#2,0);}

\begin{document}

\title[Web permutations, Seidel triangle and normalized $\gamma$-coefficients]{Web permutations, Seidel triangle and \\normalized $\gamma$-coefficients}

\author[Y. Dong]{Yao Dong}
\address[Yao Dong]{Research Center for Mathematics and Interdisciplinary Sciences, Shandong University \& Frontiers Science Center for Nonlinear Expectations, Ministry of Education, Qingdao 266237, P.R. China}
\email{y.dong@mail.sdu.edu.cn}

\author[Z. Lin]{Zhicong Lin}
\address[Zhicong Lin]{Research Center for Mathematics and Interdisciplinary Sciences, Shandong University \& Frontiers Science Center for Nonlinear Expectations, Ministry of Education, Qingdao 266237, P.R. China}
\email{linz@sdu.edu.cn}

\author[Q.Q. Pan]{Qiongqiong Pan}
\address[Qiongqiong Pan]{College of Mathematics and Physics, Wenzhou University, Wenzhou 325035, P.R. China}
\email{qpan@wzu.edu.cn}

\date{\today}
\begin{abstract}
The web permutations were introduced by Hwang, Jang and Oh to interpret the entries of the transition matrix between the Specht and $\mathrm{SL}_2$-web bases of the irreducible $\S_{2n}$-representation indexed by $(n,n)$. They conjectured that certain classes of web permutations are enumerated by the Seidel triangle. Using generating functions, Xu and Zeng showed that enumerating web permutations by the number of drops, fixed points and cycles gives rise to the normalized $\gamma$-coefficients of the $(\alpha,t)$-Eulerian  polynomials. They posed the problems to prove their result combinatorially  and to find an interpretation of the normalized $\gamma$-coefficients in terms of cycle-up-down  permutations. In this work, we prove the enumerative conjecture of Hwang-Jang-Oh and answer the two open problems proposed by Xu and Zeng.

\end{abstract}

\keywords{Web permutations; Andr\'e permutations; Seidel triangle; group actions; permutation statistics.}
\maketitle

\section{Introduction}
Let $[n]:=\{1,2, \ldots, n\}$ and $\S_{n}$ be the set of permutations of $[n]$.
In order to interpret the entries of the transition matrix between the Specht and $\mathrm{SL}_2$-web bases of the irreducible $\S_{2n}$-representation indexed by $(n,n)$, Hwang, Jang and Oh~\cite{HJO} introduced the web permutations. These permutations are defined by applying smoothing and switching operations on grid configurations until there is no crossing (see Section~\ref{sec:webper}). 
One of the main results proved in~\cite{HJO} is a nice characterization of web permutations closely related to the Andr\'e permutations invented by Foata and Sch\"utzenberger~\cite{SF73}. 

The  {\em Andr\'e permutations} (of the first kind) can be defined recursively as follows. First, the empty word is considered to be an Andr\'e permutation. Given a permutation $w=w_1w_2\cdots w_n$ ($n\geq1$) of distinct positive integers with the smallest letter $w_k$, then $w$ is an Andr\'e permutation if both $w_1w_2\cdots w_{k-1}$ and $w_{k+1}w_{k+2}\cdots w_n$ are  Andr\'e permutations and $\max\{w_1,\ldots,w_{n}\}=\max\{w_k,\ldots,w_n\}$. For instance, $31245$ is an Andr\'e permutation but $132$ is not. 
Let $C=(a_1,\ldots,a_k)$ be a cycle (cyclic permutation)  with $a_1=\min\{a_1,\ldots,a_k\}$. Then, cycle $C$ is called an \emph{Andr\'e cycle} if the word $a_2\cdots a_k$ is an Andr\'e permutation. For instance, $C=(2,3,5,6,1,4)$ is an Andr\'e cycle since $C=(1,4,2,3,5,6)$ and the permutation $42356$ is an Andr\'e permutation. 

Note that any permutation  can be written  as a product of disjoint cycles. The following characterization of web permutations is due to Hwang, Jang and Oh~\cite[Theorem~3.4]{HJO}. 

\begin{thm}[Hwang, Jang and Oh]\label{thm:huang}
A permutation $\sigma\in\S_n$ is a web permutation iff each cycle of $\sigma$ is an Andr\'e cycle. 
\end{thm}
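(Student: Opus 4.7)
The plan is to establish this characterization by induction on $n$, exploiting the recursive structure that underlies both web permutations and Andr\'e cycles. Since the recursive definition of an Andr\'e permutation pivots on the smallest letter, I expect the position of the smallest element in each cycle of $\sigma$ to play the analogous role in the grid resolution that defines a web permutation.

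First I would unpack the resolution procedure of Section~\ref{sec:webper}: the smoothing and switching operations are local moves on a grid configuration, so to use them inductively I would introduce a reduction step that peels off a cycle. Specifically, letting $C$ denote the cycle of $\sigma$ containing $1$, I would show that after applying all the local moves, the strands involved in $C$ separate cleanly from the strands labeled by $[n]\setminus C$, and the remainder resolves to a web permutation on the smaller alphabet. This reduction is what makes both directions amenable to induction, and identifying the right ``decoupling'' statement is the first technical hurdle.

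For the forward direction (web $\Rightarrow$ all cycles Andr\'e), I would track how the max-condition $\max\{w_1,\ldots,w_n\}=\max\{w_k,\ldots,w_n\}$ from the Andr\'e definition is forced by the local smoothing rules: a crossing cannot, after resolution, place a larger letter to the left of the smallest surviving letter. Combined with the inductive hypothesis applied to both the peeled-off cycle $C$ and the remaining permutation on $[n]\setminus C$, and to the two halves of $C$ obtained by splitting at its minimum, this would give that every cycle of $\sigma$ is Andr\'e.

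For the converse (all Andr\'e cycles $\Rightarrow$ web), I would construct an explicit grid configuration that resolves to $\sigma$. Given an Andr\'e cycle $(a_1,\ldots,a_k)$ with $a_1$ minimal, the Andr\'e property of $a_2\cdots a_k$ provides a recursive split at each local minimum, and I would translate these splits into a canonical placement of crossings. The main obstacle, and the crux of the whole argument, is verifying that this constructed configuration indeed resolves to $\sigma$ under the prescribed smoothing/switching rules: the matching between the recursive ``min-split'' structure of Andr\'e cycles and the order in which crossings get resolved must be made precise, and I expect the bookkeeping of how the max-condition interacts with switching (rather than smoothing) at each crossing to be the trickiest step.
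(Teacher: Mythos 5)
First, note that the paper you are working from does not actually prove Theorem~\ref{thm:huang}: it is quoted from Hwang, Jang and Oh~\cite[Theorem~3.4]{HJO}, so there is no in-paper argument to compare yours against. Your proposal therefore has to stand on its own, and as written it does not: it is a plan in which every genuinely hard step is named and then deferred. You flag the ``decoupling'' statement as ``the first technical hurdle,'' the converse construction as ``the crux of the whole argument,'' and the interaction of the max-condition with switching as ``the trickiest step,'' but none of these is carried out, so neither implication is actually established.

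The most serious structural gap is the decoupling step itself. Web permutations are defined by a resolution process that starts from $G(id_n,\emptyset)$ and passes through intermediate states $G(\pi,E)$ with $E\subseteq\Cr(\pi)$; the cycle structure of the terminal permutation $\sigma$ is only created along the way, one transposition per switching. An induction ``on $n$, peeling off the cycle $C$ of $\sigma$ containing $1$'' presupposes that the strands of $C$ separate from the rest, but the switchings that build $C$ are interleaved with, and constrained by, the maximality condition on crossings involving the other strands, so this separation is precisely what must be proved. Any honest induction here has to be set up on the intermediate configurations $G(\pi,E)$ (equivalently, on ``web permutations from $\pi$'' in the sense of Section~\ref{sec:webper}), not merely on $n$. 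Likewise, in the forward direction your key assertion --- that a crossing cannot, after resolution, place a larger letter to the left of the smallest surviving letter --- is essentially a restatement of the max-condition in the Andr\'e definition and is given without justification; and in the converse you would need to exhibit an actual sequence of smoothing/switching choices from $G(id_n,\emptyset)$ reaching $G(\sigma,\Cr(\sigma))$, which the proposal does not attempt. As it stands the proposal identifies a plausible shape for an argument but contains no proof.
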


Let $\Web_n$ be the set of web permutations in $\S_n$. Based on the above characterization, they~\cite{HJO} investigated the enumerative properties of web permutations. In particular, they showed that counting permutations  $\sigma\in\Web_n$ with first letter $\sigma_1=n+1-k$ gives the {\em Entringer number} $E_{n,k}$, a refinement of the classical Euler numbers. At the end of their article, they also considered the permutations $\sigma\in\Web_n$ whose associated matching $M(\sigma)$ equals $M_0^{(n)}$, where $M(\sigma)$ is defined in Section~\ref{sec:webper} and 
\begin{equation}\label{eq:Mat0}
M_0^{(n)}:=\{\{0,1\},\{2,3\},\ldots,\{2n-2,2n-1\}\}
\end{equation} 
is the unique matching of $\{0,1,\ldots,2n-1\}$ that is simultaneously noncrossing and nonnesting. Let $\widetilde\Web_n:=\{\sigma\in\Web_n: M(\sigma)=M_0^{(n)}\}$ and let $f(n,k)$ be the number of $\sigma\in\widetilde\Web_n$ with $\sigma_1=k$. They showed that $f(n,2k)=f(n,n)=0$ (for $n\ge 1$ and $1\leq k \leq \lfloor n/2\rfloor$) and conjectured the following new interpretation for the {\em Seidel triangle}~\cite{Sei77} $(s_{i,j})_{i,j\geq1}$
such that $s_{1,1}=s_{2,1}=1$ and 
$$\begin{cases}
s_{i,j}= 0,\qquad\qquad\qquad\qquad&\text{ if } j=0 \mbox{ or} ~j>\lceil i/2 \rceil;\\
    s_{2i+1,j}= s_{2i+1,j-1} + s_{2i,j},  &\text{ if } j=1,2,\dots, i+1;\\
    s_{2i,j}= s_{2i,j+1} + s_{2i-1,j},  &\text{ if} ~j=1,2,\dots,i.
  \end{cases}$$

\begin{conj}[\text{Hwang, Jang and Oh~\cite[Conjecture~4.9]{HJO}}]\label{conj:hwang}
For $n\geq1$, we have
$$\begin{cases}
f(2n-1,2k-1)=s_{2n-2,k}, \\
f(2n,2k-1)= s_{2n-1,n-k+1}.
\end{cases}$$
\end{conj}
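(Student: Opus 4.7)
The plan is to prove Conjecture~\ref{conj:hwang} by first obtaining a clean structural description of $\widetilde{\Web}_n$ and then establishing a recurrence for $f(n,k)$ that matches the Seidel recurrence after the re-indexing $f(2n-1,2k-1) = s_{2n-2,k}$ and $f(2n,2k-1) = s_{2n-1, n-k+1}$.

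First, I would unpack the definition of the matching $M(\sigma)$ from Section~\ref{sec:webper} and translate the constraint $M(\sigma) = M_0^{(n)}$ into a transparent condition on the cycles of $\sigma$. Since $M_0^{(n)}$ pairs $\{2i,2i+1\}$, the condition should force each smoothing to produce only \emph{local} pairings, which in turn imposes a parity-type constraint on how each cycle interleaves even and odd letters. Combined with Theorem~\ref{thm:huang}, this should characterize $\widetilde{\Web}_n$ as the class of permutations whose cycles are Andr\'e cycles satisfying this additional parity constraint; in particular, the vanishing identities $f(n,2k) = f(n,n) = 0$ should fall out immediately, matching the Seidel-triangle boundary $s_{i,j} = 0$ for $j > \lceil i/2 \rceil$.

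With that characterization in hand, I would next match the Seidel recurrences $s_{2i+1,j} = s_{2i+1,j-1} + s_{2i,j}$ and $s_{2i,j} = s_{2i,j+1} + s_{2i-1,j}$ to recurrences for $f$. The natural tool is either a swap operation that exchanges $\sigma_1$ with its $M_0^{(n)}$-partner, or a controlled deletion/re-insertion of the letter $n$ that is compatible with the cycle-theoretic description from Step~1. For odd $n = 2m-1$, the goal is a bijection
$$\{\sigma \in \widetilde{\Web}_{2m-1} : \sigma_1 = 2k-1\} \longleftrightarrow \{\sigma \in \widetilde{\Web}_{2m-1} : \sigma_1 = 2k+1\} \sqcup \mathcal{G}_{2m-2}$$
for a suitable subclass $\mathcal{G}_{2m-2} \subseteq \widetilde{\Web}_{2m-2}$, and an analogous bijection for even $n$. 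This would mirror the boustrophedon alternation between odd and even Seidel rows, and after verifying the base cases $f(1,1) = 1$ and $f(2,1) = 1$ the induction closes.

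The main obstacle will be the first step: the matching $M(\sigma)$ is constructed via a topological smoothing-and-switching procedure on a grid diagram, and extracting a clean cycle-theoretic reformulation of the equation $M(\sigma) = M_0^{(n)}$ is the crux of the argument. Once that translation is explicit, the swap/insertion argument in the second step should proceed by essentially standard first-letter manipulation together with a short induction, and Step~3 is a routine check.
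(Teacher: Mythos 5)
Your plan has a genuine gap, and it sits exactly where you yourself flag ``the main obstacle'': Step~1 asks for a clean cycle-theoretic characterization of the permutations $\sigma\in\Web_n$ with $M(\sigma)=M_0^{(n)}$. The paper does \emph{not} obtain such a characterization --- in fact, immediately after Table~\ref{tab1} the authors state that they have no idea how to characterize $\widetilde\Web_n$ and they pose it as an open problem. So the linchpin of your argument is not a technical lemma to be checked later but an unsolved problem, and you offer no concrete mechanism (beyond the hope that the parity of $M_0^{(n)}$ ``should force'' local pairings) for extracting it from the smoothing-and-switching procedure. Without it, Step~2 has nothing to act on: the swap/deletion bijections you propose would have to be verified to preserve membership in $\widetilde\Web_n$, which requires tracking how the matching $M(\sigma)$ changes under first-letter manipulations --- precisely the information Step~1 was supposed to supply. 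The vanishing identities $f(n,2k)=f(n,n)=0$ do not ``fall out'' of your framework either; they were already proved in~\cite{HJO} by other means.

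The paper's actual route avoids this entirely. It observes that the boundary of a grid configuration can be read as a circle, under which resolving a crossing by smoothing or switching becomes Nakamigawa's expansion of a crossing in a chord diagram, and the condition $M(\sigma)=M_0^{(n)}$ becomes the condition that the terminal nonintersecting diagram is a necklace. After peeling off the first column of crossings (Theorem~\ref{fb}), the count $f(n,2k-1)$ is identified with a necklace multiplicity $b^{+}_{\ast,\ast}=m(A(\ast,\ast),N^{+}_{\ast,\ast})$ via Lemmas~\ref{hlem} and~\ref{b+b-}, and Nakamigawa's Theorem~\ref{bs} already expresses these multiplicities in terms of the Seidel triangle. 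In other words, the Seidel recurrence is never re-derived on the permutation side; it is imported wholesale through the chord-diagram dictionary. If you want to salvage your outline, the realistic fix is to replace Step~1 by this translation to chord diagrams rather than by a structural description of $\widetilde\Web_n$.
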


\begin{figure}
  \begin{tabular}{c|ccccc}
    \( i\backslash j \) & 1 & 2 & 3 & 4 & 5 \\ \hline
    1 & 1 & & & & \\
    2 &1 & & & & \\
    3 & 1 & 1 & & & \\
    4 & 2 & 1 & & & \\
    5 & 2 & 3 & 3 & & \\
    6 & 8 & 6 & 3 & & \\ 
    7 & 8 & 14 & 17 & 17 & \\
    8 & 56 & 48 & 34 & 17 &\\
    9 & 56 & 104 & 138 & 155 & 155\\
  \end{tabular}
    \caption{The Seidel triangle $s_{i,j}$ for $i\leq 9$ and $j\leq 5$.}
    \label{seidel}
 \end{figure}
 
 The first values of the Seidel triangle are given as in Fig.~\ref{seidel}. Note that the Seidel triangle  refines the {\em Genocchi numbers of the first kind} $g_{2n-1}$ and the {\em median Genocchi numbers} $g_{2n}$ (also known as {\em Genocchi numbers of the second kind}) by 
 $$ g_{2n-1}=s_{2n-1,n}, \quad g_{2n}=s_{2n,1}.$$ 
 The Seidel triangle and Genocchi numbers appear frequently in combinatorics and other domains of mathematics~\cite{Ang,Dum,Fei,FLS,HJO,PZ2,RE,TN}. In this work, we connect web permutations with the expansions of  chord diagrams studied by Nakamigawa~\cite{TN16,TN}, leading to a proof of Conjecture~\ref{conj:hwang}. 
 
 Recent work of Xu and Zeng~\cite{Xu} shows that web permutations can also be used to interpret the normalized $\gamma$-coefficients of the $(\alpha,t)$-Eulerian  polynomials. For $\sigma=\sigma_1\cdots\sigma_n\in\S_n$, an index $i$ is called  a {\em descent} (resp., {\em drop}) of $\sigma$ if $\sigma_i>\sigma_{i+1}$ (resp.,~$i>\sigma_i$). It is well known (see~\cite[Section~1.4]{St}) that the classical {\em Eulerian polynomial} $A_n(x)$ has the following interpretations 
 $$
 A_n(x)=\sum_{\sigma\in\S_n} x^{\des(\sigma)}=\sum_{\sigma\in\S_n} x^{\drop(\sigma)},
 $$
 where $\des(\sigma)$ (resp.,~$\drop(\sigma)$) is the number of descents (resp.,~drops) of $\sigma$. 
 As was proved by  Foata and Sch\"utzenberger~\cite{FS-70} (see also~\cite{PZ}), the Eulerian polynomial $A_n(x)$ admits the following $\gamma$-positivity expansion
 \begin{equation}\label{divi:gam}
 A_n(x)=\sum_{i=0}^{\lfloor(n-1)/2\rfloor}2^id_{n,i}x^i(1+x)^{n-1-2i},
 \end{equation}
 where $d_{n,i}$ is the number of Andr\'e permutations in $\S_n$ with $i$ descents. 
Given $\sigma\in\S_n$, the letter $\sigma_i$ is a {\em left-to-right minimum} (resp., {\em right-to-left minimum}) of $\sigma$ if $\sigma_i<\sigma_j$  for all $j<i$ (resp.,~$j>i$).  With the convention $\sigma_0=\sigma_{n+1}=+\infty$,  the letter $\sigma_i$ is  called a {\em double descent} (resp., {\em double ascent, peak, valley}) of $\sigma$ if $\sigma_{i-1}>\sigma_i>\sigma_{i+1}$ (resp.,~$\sigma_{i-1}<\sigma_i<\sigma_{i+1}$, $\sigma_{i-1}<\sigma_i>\sigma_{i+1}$, $\sigma_{i-1}>\sigma_i<\sigma_{i+1}$). Furthermore, if   $\sigma_i$  is not only a left-to-right minimum (resp.,~right-to-left minimum) but also a double descent (resp.,~double ascent), then it is called a {\em left-to-right-minimum-double-descent} (resp.,~{\em right-to-left-minimum-double-ascent}). 
 Inspired by the works in~\cite{CS,CS77,Ji23,JL,MQYY}, Xu and Zeng~\cite{Xu} considered the {\em$(\alpha,t)$-Eulerian  polynomials}
$$
  A_n(x,t|\alpha):=\sum_{\sigma  \in \S_n}x^{\des(\sigma )}t^{\lmidd(\sigma )+\rmida(\sigma)}\alpha^{\lmi(\sigma)+\rmi(\sigma)-2},
$$
where $\lmi(\sigma)$  (resp.,~$\rmi(\sigma)$, $\lmidd(\sigma)$, $\rmida(\sigma)$) denotes the number of left-to-right minima (resp.,~right-to-left minima, left-to-right-minimum-double-descents, right-to-left-minimum-double-ascents)  of $\sigma$. 
 In~\cite[Theorem~2.6]{Xu}, they proved the following $\gamma$-positivity expansion of $A_n(x,t|\alpha)$:
\begin{equation}\label{gam:at}
A_n(x,t|\alpha)=\sum_{i=0}^{\lfloor(n-1)/2\rfloor}\gamma_{n,i}(\alpha,t)x^i(1+x)^{n-1-2i}
\end{equation}
 with 
 \begin{equation}
 \gamma_{n,i}(\alpha,t)=\sum_{\sigma\in\widetilde\S_n\atop{\des(\sigma)=i}}\alpha^{\lmi(\sigma)+\rmi(\sigma)-2}t^{\rmida(\sigma)},
 \end{equation}
 where $\widetilde\S_n$ is the set of permutations in $\S_n$ with no double descents. 
It should be noted that the $t=1$ case of the $\gamma$-positivity expansion~\eqref{gam:at} was first proved by Ji in~\cite{Ji23} via Chen's context-free grammar and latter proved combinatorially by Ji and Lin~\cite{JL} via introducing a new group action on permutations. 
 
 Using generating functions, Xu and Zeng~\cite[Theorem~2.9]{Xu} further proved that 
 
 \begin{thm}[\text{Xu and Zeng}] For $0\leq i\leq\lfloor (n-1)/2\rfloor$, we have 
 \begin{equation}\label{divi:gam:at}
  \gamma_{n,i}(\alpha,t)=2^id_{n,i}(\alpha,t)
 \end{equation}
 with 
 \begin{equation}\label{int:dni}
 d_{n,i}(\alpha,t)=\sum_{\sigma\in\Web_{n-1}\atop{\drop(\sigma)=i}}t^{\fix(\sigma)}\alpha^{\cyc(\sigma)},
 \end{equation}
 where $\fix(\sigma)$ (resp.,~$\cyc(\sigma)$) denotes the number of fixed points (resp., cycles) of $\sigma$.
 \end{thm}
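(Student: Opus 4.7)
The factor $2^i$ in $\gamma_{n,i}(\alpha,t)=2^i d_{n,i}(\alpha,t)$ strongly suggests a $\mathbb{Z}_2$-action on $\widetilde{\S}_n$ whose orbit through each $w$ has size $2^{\des(w)}$ and on which the weight $\alpha^{\lmi(w)+\rmi(w)-2}t^{\rmida(w)}$ is constant. Accordingly, the plan is to construct (i) such a group action on $\widetilde{\S}_n$, and (ii) a bijection $\Phi\colon\Web_{n-1}\to \R_n$ between web permutations and the set $\R_n$ of canonical orbit representatives sending $\drop\mapsto\des$, $\cyc\mapsto\lmi+\rmi-2$, and $\fix\mapsto\rmida$. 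Combining (i) and (ii) immediately rewrites the sum defining $\gamma_{n,i}(\alpha,t)$ as $2^i$ copies of the sum defining $d_{n,i}(\alpha,t)$.

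For part (i), I would adapt a modified Foata--Strehl \emph{valley-hopping} action. Its $t=1$ analogue has already been built by Ji and Lin~\cite{JL} in their combinatorial proof of \eqref{gam:at} at $t=1$, so the main task is to refine their involutions $\phi_x$ (one for each $x\in[n]$) so that they additionally preserve $\rmida$. For each letter $x$ sitting at a double-ascent or double-descent position, $\phi_x$ hops $x$ between the two adjacent maximal ascending/descending runs; at peaks and valleys it acts trivially. One then checks that the $\phi_x$'s commute, that each orbit has size exactly $2^{\des(w)}$, and that both $\lmi+\rmi$ and $\rmida$ are constant along every orbit.

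For part (ii), I would use a Foata-style cycle-to-line conversion tailored to Andr\'e cycles. By Theorem~\ref{thm:huang}, each cycle of $\sigma\in\Web_{n-1}$, written smallest-first as $(a_1,a_2,\ldots,a_k)$, has $a_2\cdots a_k$ an Andr\'e word. I would sort the cycles in decreasing order of their minima, concatenate them into a single word $\tilde w$ on $[n-1]$, and then insert the letter $n$ in the unique position that turns $\tilde w$ into an element of $\widetilde{\S}_n$ which is a canonical orbit representative. One verifies that each cycle of $\sigma$ contributes exactly one left-to-right or one right-to-left minimum of $\Phi(\sigma)$ (the "$-2$" in $\lmi+\rmi-2$ absorbing the trivial minima at the endpoints of the word), each fixed point of $\sigma$ becomes a right-to-left-minimum double-ascent, and each drop of $\sigma$ becomes a descent.

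The main obstacle is the preservation of the $t$-statistic $\rmida$ by the group action: the valley-hopping involutions must be crafted so that the whole set of right-to-left minima, together with their double-ascent status, is invariant along orbits, and not merely the aggregate $\lmi+\rmi$. A secondary difficulty is pinning down the rule for inserting $n$ in the construction of $\Phi$ so that the image is exactly $\R_n$; this hinges on translating the Andr\'e structure supplied by Theorem~\ref{thm:huang} into the local descent/ascent pattern at the insertion point.
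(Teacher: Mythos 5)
Your overall strategy---a group action on $\widetilde\S_n$ with orbits of size $2^{\des}$ together with a cycle-to-word bijection onto the canonical orbit representatives---is exactly the paper's, and your part (ii) is essentially the paper's correspondence $\hat\sigma=1\b_1\cdots\b_l\mapsto c(\b_1)c(\b_2)\cdots c(\b_l)$ (the representatives begin with the letter $1$ rather than having $n$ inserted; each block is a cycle written with its minimum last and all letters shifted by one; singleton blocks are exactly the right-to-left-minimum double ascents, whence $\fix\mapsto\rmida$). The genuine gap is in part (i). The action you sketch---indexed by letters at double-ascent or double-descent positions that ``hop between the two adjacent runs''---is the mechanism behind the $\gamma$-expansion~\eqref{gam:at} itself: it is an action on all of $\S_n$ whose orbit representatives are the elements of $\widetilde\S_n$ and whose orbits have size $2^{n-1-2i}$. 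It cannot produce~\eqref{divi:gam:at}: hopping a double ascent creates a double descent and so leaves $\widetilde\S_n$; the number of hoppable letters is $n-1-2\des(\sigma)$, not $\des(\sigma)$; and moving a single letter across a run does not in general preserve $\lmi+\rmi$. What is required is an action that \emph{stabilizes} $\widetilde\S_n$ and is indexed by a set of cardinality $\des(\sigma)$; the paper uses the valleys other than $1$ (for $\sigma\in\widetilde\S_n$ one has $\des=\pk=\val-1$).

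The paper's resolution, which your sketch does not reach, is the block-Foata--Strehl action built on the bi-basic decomposition $\sigma=\a_1\cdots\a_k\,1\,\b_1\cdots\b_l$: for a valley $x$ that is the minimum of a block $\a_i$ (or of a block $\b_i$ of length at least two), the involution $\psi_x$ first applies the ordinary maps $\varphi_y$ at the double ascents inside the block, then reverses the block and transplants it to the other side of the letter $1$, converting a left-to-right minimum into a right-to-left minimum so that $\lmi+\rmi$ is conserved; for the remaining valleys $\psi_x$ is just the wing-swap $\varphi_x$, which fixes all relevant statistics; and singleton $\b$-blocks are never moved, which is precisely what preserves $\rmida$. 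The canonical representative is then characterized by $\lmi=1$ together with each block becoming an Andr\'e permutation after its last letter is removed (via Lemma~\ref{xfac:andr}), which is what makes the image of your $\Phi$ land exactly on the representatives. What you flag as the ``main obstacle'' is therefore not a technical refinement of Ji--Lin but the entire content of the proof, so as it stands the argument is incomplete.
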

 The polynomials $\gamma_{n,i}(\alpha,t)$ and $d_{n,i}(\alpha,t)$ are called the {\em$\gamma$-coefficients} and  the {\em normalized $\gamma$-coefficients} of the $(\alpha,t)$-Eulerian  polynomials, respectively.
At the end of~\cite{Xu},  they also posed the following open problem for further research. 
 \begin{?}[\text{Xu and Zeng~\cite[Problem~2]{Xu}}]\label{pro1:xz}
 Find a combinatorial proof of~\eqref{divi:gam:at}. 
 \end{?}
 
We shall  construct a group action on permutations without double descents and prove~\eqref{divi:gam:at} combinatorially, thereby answering Problem~\ref{pro1:xz}.

 A permutation $\sigma\in\S_n$ is an {\em up-down} permutation if 
 $$
 \sigma_1<\sigma_2>\sigma_3<\cdots.
 $$
 Let $C=(a_1,\ldots,a_k)$ be a cycle (cyclic permutation)  with $a_1=\min\{a_1,\ldots,a_k\}$.  
Then, the  cycle $C$ is said to be \emph{up-down} if  $a_1<a_2>a_3<\cdots$, i.e., $a_1a_2\cdots a_k$ is an up-down permutation. We say that a permutation $\sigma$ is \emph{cycle-up-down} if it is a product of disjoint up-down cycles. 
Let $\Delta_n$ be the set of cycle-up-down permutations in $\S_n$. Cycle-up-down permutations were considered by Deutsch and Elizade~\cite{DE11}, where they proved the following bivariate generating function
\begin{equation}\label{eq:Eli}
\sum_{n\geq0}\sum_{\sigma\in\Delta_n}t^{\mathrm{fix}(\sigma)}\alpha^{\mathrm{cyc}(\sigma)}\frac{z^n}{n!}=\frac{e^{\alpha(t-1)z}}{(1-\sin z)^{\alpha}}.
\end{equation}
On the other hand, Xu and Zeng~\cite{Xu} proved that 
\begin{equation*}\label{eq:XuZ}
\sum_{n\geq0}\sum_{\sigma\in\Web_n}t^{\fix(\sigma)}\alpha^{\cyc(\sigma)}\frac{z^n}{n!}=\frac{e^{\alpha(t-1)z}}{(1-\sin z)^{\alpha}}.
\end{equation*}
Comparing with~\eqref{eq:Eli} gives the equidistribution 
\begin{equation}\label{equi:XuZ}
\sum_{\sigma\in\Delta_n}t^{\mathrm{fix}(\sigma)}\alpha^{\mathrm{cyc}(\sigma)}=\sum_{\sigma\in\Web_n}t^{\fix(\sigma)}\alpha^{\cyc(\sigma)}. 
\end{equation}
As was noted in~\cite{Xu}, Foata and Han~\cite{F-H14} constructed a bijection between Andr\'e permutations  and up-down permutations, which preserves the size and the first letter.  Applying the bijection to each cycle of $\sigma\in\Web_n$ immediately yields a bijection  from $\Web_n$ to $\Delta_n$ that proves~\eqref{equi:XuZ}. In view of~\eqref{int:dni} and~\eqref{equi:XuZ}, Xu and Zeng~\cite{Xu} also posed the following interesting open problem. 

\begin{?}[\text{Xu and Zeng~\cite[Problem~1]{Xu}}]\label{pro2:xz}
Find a suitable statistic $\widehat{\mathrm{drop}}$ over $\Delta_n$ and a bijection $\Lambda:\Web_n\rightarrow\Delta_n$ such that $\mathrm{drop}(\sigma)=\widehat{\mathrm{drop}}(\Lambda(\sigma))$ for $\sigma\in\Web_n$.
\end{?}

 We answer the above open problem by introducing a recursively defined statistic ``$\mix$" on permutations. First, define $\mathrm{mix}(\emptyset)=0$. 
 For $\sigma=\sigma_1\sigma_2\cdots\sigma_n\in\S_n$ with $n\geq1$, suppose that  $j$ is the least integer for which either  $\sigma_j=\min\{\sigma_1,\ldots,\sigma_n\}$ or $\sigma_j=\max\{\sigma_1,\ldots,\sigma_n\}$. Then define
$$
\mathrm{mix}(\sigma):=
\begin{cases}
0,&\text{if $n=1$};\\
\mathrm{mix}(\sigma_1\cdots\sigma_{j-1})+\mathrm{mix}(\sigma_{j+1}\cdots\sigma_{n})+\chi(1<j<n),& \text{otherwise}.
\end{cases}
$$
Here $\chi(\mathsf{S})$ equals $1$, if the statement $\mathsf{S}$ is true; and $0$, otherwise.
For example,  If $\sigma=58264713\in\S_8$, then by searching for  more forward positions of the smallest/largest letters  successively, we have
$$
\mix(\sigma)=1+\mix(5)+\mix(264713)=2+\mix(264)+\mix(13)=2.
$$
Let $C=(a_1,\ldots,a_l)$ be a cycle (cyclic permutation)  with $a_1=\min\{a_1,\ldots,a_l\}$. Define 
\begin{equation}\label{def:drop}
\widehat{\mathrm{drop}}(C)=\mix(a_1a_2\cdots a_l)+\chi(l>1)\quad\text{and}\quad\widehat{\mathrm{drop}}(\sigma)=\sum_{i=1}^k\widehat{\mathrm{drop}}(C_i),
\end{equation}
assuming $\sigma=C_1C_2\cdots C_k\in\S_n$ (written in product of cycles).

The following result answers Problem~\ref{pro2:xz}.
\begin{thm}\label{thm:pro2xz}
There exists a bijection $\Lambda:\Web_n\rightarrow\Delta_n$ such that $\mathrm{drop}(\sigma)=\widehat{\mathrm{drop}}(\Lambda(\sigma))$ for $\sigma\in\Web_n$. Consequently,
\begin{equation}\label{int:dni2}
 d_{n+1,i}(\alpha,t)=\sum_{\sigma\in\Delta_{n}\atop{\widehat\drop(\sigma)=i}}t^{\fix(\sigma)}\alpha^{\cyc(\sigma)}.
 \end{equation}
\end{thm}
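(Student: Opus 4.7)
The plan is to construct $\Lambda$ cycle by cycle, reducing the theorem to a statistic-matching bijection on individual cycles. By Theorem~\ref{thm:huang}, any $\sigma \in \Web_n$ factors into disjoint Andr\'e cycles. For an Andr\'e cycle $C = (a_1, \ldots, a_l)$ with $a_1 = \min\{a_1,\ldots,a_l\}$, the contribution of $C$ to $\drop(\sigma)$ is $|\{j : a_j > a_{j+1}\}|$ (indices cyclic with $a_{l+1} = a_1$); since $a_1 < a_2$ and $a_l > a_1$, this rewrites as $\drop(C) = \des(a_1 a_2 \cdots a_l) + \chi(l > 1)$. On the image side $\widehat{\drop}((b_1,\ldots,b_l)) = \mix(b_1\cdots b_l) + \chi(l > 1)$ by definition, and the defining recursion for $\mix$ yields $\mix(b_1\cdots b_l) = \mix(b_2\cdots b_l)$ whenever $b_1$ is the minimum. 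The theorem therefore reduces to constructing, for every finite subset $T$ of positive integers, a bijection $\phi_T$ from the set of Andr\'e permutations of $T$ to the set of alternating permutations $w_1 w_2 \cdots$ of $T$ satisfying $w_1 > w_2 < w_3 > \cdots$, such that $\des(v) = \mix(\phi_T(v))$. Given such $\phi_T$, I would define $\Lambda$ by replacing each Andr\'e cycle $(a_1, \ldots, a_l)$ of $\sigma$ by the up-down cycle $(a_1, b_2, \ldots, b_l)$, where $b_2 \cdots b_l = \phi_{\{a_2,\ldots,a_l\}}(a_2 \cdots a_l)$.

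To construct $\phi_T$ I would use induction on $|T|$, exploiting two parallel recursions. On the Andr\'e side, splitting $v$ at the position $k$ of its minimum produces two Andr\'e sub-words along with the identity $\des(v) = \des(v_1\cdots v_{k-1}) + \des(v_{k+1}\cdots v_l) + \chi(k>1)$. On the image side, the $\mix$-recursion splits $w = \phi_T(v)$ at the leftmost position $j$ carrying either its minimum or its maximum, giving $\mix(w) = \mix(w_1\cdots w_{j-1}) + \mix(w_{j+1}\cdots w_l) + \chi(1<j<l)$. A key observation is that the leftmost extremum of an alternating word cannot sit at the last position (that would force the min and the max to coincide), so $\chi(k>1)$ and $\chi(1<j<l)$ agree exactly when the two splits are either both at position~$1$ or both strictly interior. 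I would arrange $\phi_T$ to respect this dichotomy: the case $k=1$ (where $v_1$ is the minimum of $T$) is paired with the case $j=1$ (where $w_1$ is the maximum of $T$), and the case $k>1$ is paired with $1<j<l$.

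The main technical obstacle lies in the recursive step in the interior case. There I must prescribe how the two Andr\'e halves $v_1\cdots v_{k-1}$ and $v_{k+1}\cdots v_l$ are distributed across the two parts of $w$ on either side of the leftmost extremum, while respecting the alternation pattern imposed on $w$ and the Andr\'e hypothesis that the overall maximum lies in the second half. I expect to handle this via a further case analysis on whether the leftmost extremum of $w$ is the min or the max of $T$, invoking a companion bijection for the Andr\'e--to--up-down direction as well (needed because the two halves of $w$ carry opposite alternation patterns). Once $\phi_T$ and the identity $\des = \mix \circ \phi_T$ are established, $\drop(\sigma) = \widehat{\drop}(\Lambda(\sigma))$ follows by summing the contributions over cycles, and equation~\eqref{int:dni2} is an immediate consequence of~\eqref{int:dni}.
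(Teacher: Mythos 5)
Your reduction to a cycle-level bijection is correct and coincides with the paper's first step: writing $\drop(C)=\des(a_1\cdots a_l)+\chi(l>1)$ and $\widehat{\drop}(C')=\mix(b_1\cdots b_l)+\chi(l>1)$, everything comes down to a bijection between Andr\'e permutations and alternating permutations of the same set carrying $\des$ to $\mix$ (and keeping the minimum in first position, or equivalently acting on the word after the minimum). But that bijection \emph{is} the content of the theorem, and you have not constructed it. Your induction stalls exactly where you say it does: in the interior case you must decide (i) whether the leftmost extremum of $w$ is $\min T$ or $\max T$, (ii) which letters go to its left and which to its right --- note the supports of the two halves of $w$ need not equal the supports of the two Andr\'e halves of $v$, so the recursion only makes sense up to order-isomorphism and the re-indexing must be specified (already in your base pairing $k=1\leftrightarrow j=1$ you silently trade a permutation of $T\setminus\{\min T\}$ for one of $T\setminus\{\max T\}$) --- and (iii) why the resulting assignment is a bijection onto all down-up words of $T$ whose prefix before the leftmost extremum avoids both $\min T$ and $\max T$. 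The sentence ``I expect to handle this via a further case analysis'' is precisely where the proof would have to be.

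The paper sidesteps all of this bookkeeping by passing to min-max trees. For an Andr\'e permutation $\sigma$ the tree $T(\sigma)$ is increasing (Lemma~\ref{HR}) and $\des(\sigma)$ equals the number of interior nodes of $T(\sigma)$ with two children (Fact~\ref{Andre}); for an arbitrary permutation $\tau$ the defining recursion of $\mix$ shows that $\mix(\tau)$ is the same tree statistic of $T(\tau)$. The Hetyei--Reiner action permutes labels without changing the shape of the tree, so applying $\phi_S$, with $S$ the set of interior nodes at even positions of $\sigma$, yields an alternating permutation with the same tree shape, hence the same statistic and the same first letter. If you push your recursive construction through you will essentially be re-deriving such a Foata--Han-type bijection by hand; as written, the proposal establishes the (correct) reduction but leaves the theorem unproved.
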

\begin{remark}
It is interesting to note that $\widehat{\mathrm{drop}}(\sigma)=\mathrm{drop}(\sigma)$ for any $\sigma\in\Web_n$.
\end{remark}
  
The rest of this article is organized as follows. In Section~\ref{Sec:conj}, we review the original definition of web permutations 
and prove Conjecture~\ref{conj:hwang} using some known results on the chord diagrams closely related to the nature of the web permutations. In Section~\ref{Sec:groact}, we introduce a new group action on permutations which enables us to provide a combinatorial proof of~\eqref{divi:gam:at}.
In Section~\ref{Sec:pro2}, we first review  the min-max trees and the Hetyei--Reiner action, and then prove Theorem~\ref{thm:pro2xz}.

\section{Expansions of  chord diagrams and proof of conjecture~\ref{conj:hwang}}
\label{Sec:conj}
In this section, we establish a connection between chord diagrams and web permutations, leading to a proof of Conjecture~\ref{conj:hwang}. 

\subsection{Web permutations}
\label{sec:webper}

We need to review the original definition of Web permutations introduced in~\cite{HJO}. 
Given an $n\times n$ cell chart $G_n$ in the first quadrant of the $xy$-plane such that the lower left coordinate is $(0,0)$ and the upper right coordinate is $(n,n)$. 
A cell in $G_n$ is recorded by $(i,j)$ if the $x$-coordinate and $y$-coordinate of its upper-right corner are $i$ and $j$, respectively. 
Given a permutation $\sigma=\sigma_1\cdots\sigma_n\in\S_n$, we first label the cell $(i,\sigma_i)$ of $G_n$ by a symbol  $\Small{\usym{2715}}$ for each $i\in[n]$,   
 and then take each $\Small{\usym{2715}}$ as a starting point, draw a straight line left and draw a straight line up to the boundary of $G_n$. 
The resulting cell chart is called the \emph{empty grid configuration} of $\sigma$; see Fig.~\ref{elbow}~$(a)$.

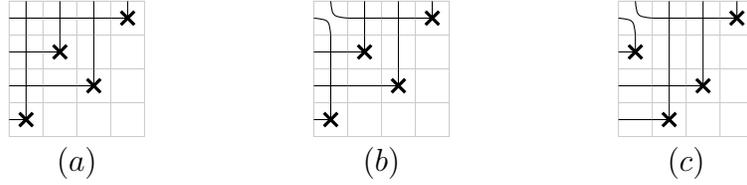
\begin{figure}
  \centering
  \begin{tikzpicture}[scale=0.45]
     
  \draw[black!20] (0,-2) grid (4,2);
  \Xmarking{4}{2}\Xmarking{2}{1}\Xmarking{3}{0}\Xmarking{1}{-1}
  \Cross{1}{0} \Cross{1}{1} \Cross{1}{2}\Cross{2}{2}
  \UP{3}{1}
  \EAST{2}{0}
  \Cross{3}{2}
  \node[below] at (2,-2) {$(a)$};

  \draw[black!20] (9,-2) grid (13,2);
  \Xmarking{13}{2}\Xmarking{11}{1}\Xmarking{12}{0}\Xmarking{10}{-1}
  \Cross{10}{0} \Cross{10}{1}
  \UP{12}{1}
  \EAST{11}{0}
  \Asmooth{10}{2} \Cross{11}{2}
  \Cross{12}{2}
  \node[below] at (11,-2) {$(b)$};

  \draw[black!20] (18,-2) grid (22,2);
  \Xmarking{22}{2}\Xmarking{20}{-1}\Xmarking{21}{0}\Xmarking{19}{1}
  \UP{21}{1} \UP{20}{1} \UP{20}{0}
  \EAST{20}{0} \EAST{19}{0} \EAST{19}{-1}
  \Asmooth{19}{2} \Cross{20}{2}
  \Cross{21}{2}
  \node[below] at (20,-2) {$(c)$};
  \end{tikzpicture}
  \caption{$(a)$, $(b)$ and $(c)$ represent the grid configurations $G(1324,\emptyset)$, $G(1324,\{(1,4)\})$ and $G(3124,\{(1,4)\})$, respectively.}
  \label{elbow}
\end{figure}

A cell $(i,j)$ in the empty grid configuration is called a {\em crossing} of $\sigma$ if there are both a horizontal line and a vertical line passing through it, i.e., $i<\sigma_j^{-1}$ and $j>\sigma_i$. 
 Let $\Cr(\sigma)$ be the set of all crossings of $\sigma$.  For any subset $E\subseteq \Cr(\sigma)$, let $G(\sigma,E)$ be the \emph{grid configuration}  obtained from the empty configuration by replacing each crossing in $E$ with an elbow (see Fig.~\ref{elb}); see Fig.~\ref{elbow}~$(b)$ for $G(1324,\{(1,4)\})$. Notice that now $\Cr(\sigma)\setminus E$ is the set of all crossings of $G(\sigma,E)$ and the empty grid configuration of $\sigma$ is $G(\sigma,\emptyset)$.
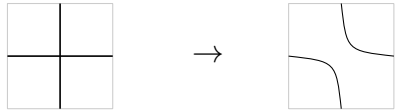
\begin{figure}[h]
      \begin{tikzpicture}[scale=0.7]
          \draw[black!20] (0,0) grid (2,2);
          \draw[line width=0.6pt] (0,1) - - (2,1);
          \draw[line width=0.6pt] (1,0) - - (1,2);
      \end{tikzpicture}
    \begin{tikzpicture}[scale=0.7]
          \draw[white!20] (0,0) grid (1,3);
          \node[text width=1cm] at (2, 1) {$\to$};
          \end{tikzpicture}
      \begin{tikzpicture}[scale=1.4]
          \draw[black!20] (0,0) grid (1,1);
          \Asmooth{1}{1}
           \end{tikzpicture}
         \caption{A crossing to an elbow.\label{elb}}
      \end{figure}

In order to introduce the algorithm for resolving the crossings, we 
define a partial order on crossings  by $(i,j)\succeq (i',j')$ if $i\leq i'$ and $j\geq j'$, i.e., $(i,j)$ appears weakly northwest of $(i',j')$.

\begin{defi}
  Let $\sigma=\sigma_1\cdots\sigma_n\in\S_n$ and $E\subseteq \Cr(\sigma)$.
  Let $(i,j)$ be a maximal crossing in $G(\sigma,E)$ that is made by two symbols $\Small{\usym{2715}}$ at positions $(i,\sigma_i)$ and $(\sigma^{-1}_j,j)$. 
  An operation that resolving  the crossing $(i,j)$ is called
\begin{itemize}
\item {\bf smoothing} if we replace $(i,j)$ with an elbow;
\item {\bf switching} if we  move the symbols $\Small{\usym{2715}}$ at cells $(i,\sigma_i), (\sigma^{-1}_j,j)$ to cells $(i,j), (\sigma^{-1}_j,\sigma_i)$,  and then delete the two straight lines connecting the  cells $(i,\sigma_i),(\sigma^{-1}_j,j)$ with $(i,j)$ but draw  two 
straight lines connecting the cells $(i,\sigma_i),(\sigma^{-1}_j,j)$ with $(\sigma^{-1}_j,\sigma_i)$.
\end{itemize}
\end{defi}

For example, resolving   the maximal crossing $(1,4)$ in Fig.~\ref{elbow}~$(a)$ by smoothing gives Fig.~\ref{elbow}~$(b)$, while 
 resolving  the  maximal crossing $(1,3)$ in Fig.~\ref{elbow}~$(b)$ by switching gives Fig.~\ref{elbow}~$(c)$.

Let $\sigma\in\S_n$, $E\subsetneq \Cr(\sigma)$ and $c=(i,j)$ be a maximal crossing in $\Cr(\sigma)\setminus E$. 
Suppose that there is no crossing in $E$ that is less than $c$. Then resolving  the crossing $c$ by smoothing  and witching  give respectively  $G(\sigma,E\cup\{c\})$ and $G(\sigma',E)$ with $\sigma'=\sigma\cdot t_{i,\ell}$, where $\ell=\sigma^{-1}_j$ and $t_{i,\ell}$ is the transposition which swaps $i$ and $\ell$. 
We write 
\begin{equation}\label{G()}
G(\sigma,E)=G(\sigma,E\cup\{c\})+G(\sigma',E).
\end{equation}

 Let $id_n$ be the identity permutation in $\S_n$. Starting from $G(id_n, \emptyset)$, we obtain two grid configurations, $G(id_n,\{(1,n)\})$ and $G( id_n\cdot t_{1,n},\emptyset)$,  by resolving the maximal crossing $(1,n)$  by smoothing and switching,  respectively.
By resolving (maximal) crossings until there are no crossings left, we get configurations of the form   $G(\sigma, \Cr(\sigma))$. For any such remaining configuration $G(\sigma, \Cr(\sigma))$, the permutation $\sigma\in\S_n$ is called a {\bf web permutation}. 
Basing on~\eqref{G()} and Proposition~\ref{web:uniq}, we get the following expression in terms of grid configuration:
$$
G(id_n, \emptyset)=\sum_{\sigma\in\Web_n}G(\sigma,\Cr(\sigma)). 
$$
See Fig.~\ref{webperm} for the process of resolving maximal crossings when $n=3$, from which we get 
$\Web_3=\{123, 132, 213, 231, 321\}$.

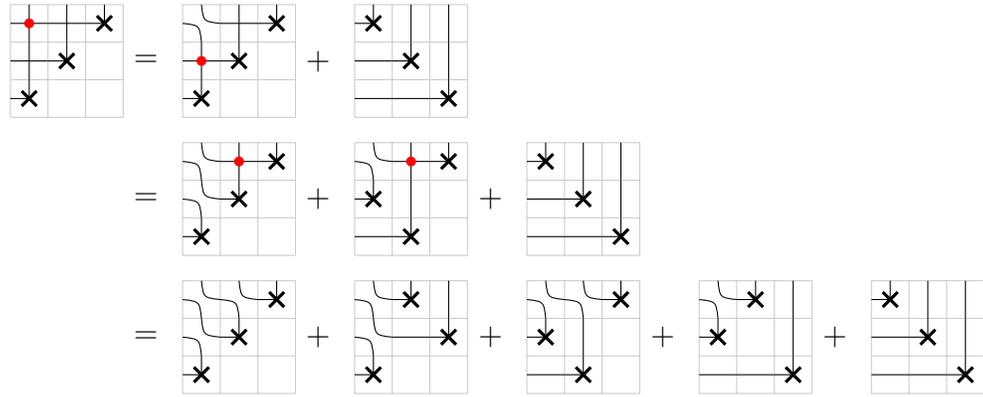
\begin{figure}
\begin{align*}
    \begin{tikzpicture}[scale=0.5]
        \draw[black!20] (0,0) grid (3,3);
        \Xmarking{1}{1}\Xmarking{2}{2}\Xmarking{3}{3}
        \Cross{1}{2}\Cross{1}{3}\Cross{2}{3}
            \node[circle, fill=red, draw=red, scale=0.3] at (0.5,2.5) {};
    \end{tikzpicture}
  &\begin{tikzpicture}[scale=0.5]
        \draw[white!20] (0,0) grid (1,3);
        \node[text width=0.5cm] at (0.5, 1.5) {$=$};
        \end{tikzpicture}
    \begin{tikzpicture}[scale=0.5]
        \draw[black!20] (0,0) grid (3,3);
        \Xmarking{1}{1}\Xmarking{2}{2}\Xmarking{3}{3}
        \Cross{1}{2}\Asmooth{1}{3}\Cross{2}{3}
            \node[circle, fill=red, draw=red, scale=0.3] at (0.5,1.5) {};
    \end{tikzpicture}
    \begin{tikzpicture}[scale=0.5]
        \draw[white!20] (0,0) grid (1,3);
        \node[text width=0.5cm] at (0.5, 1.5) {$+$};
        \end{tikzpicture}
    \begin{tikzpicture}[scale=0.5]
        \draw[black!20] (0,0) grid (3,3);
        \Xmarking{1}{3}\Xmarking{2}{2}\Xmarking{3}{1}
        \UP{2}{3}\EAST{1}{2}\UP{3}{2}\UP{3}{3}\EAST{1}{1}\EAST{2}{1}
    \end{tikzpicture}\\
    &\begin{tikzpicture}[scale=0.5]
        \draw[white!20] (0,0) grid (1,3);
        \node[text width=0.5cm] at (0.5, 1.5) {$=$};
        \end{tikzpicture}\begin{tikzpicture}[scale=0.5]
        \draw[black!20] (0,0) grid (3,3);
        \Xmarking{1}{1}\Xmarking{2}{2}\Xmarking{3}{3}
        \Cross{2}{3}\Asmooth{1}{3}\Asmooth{1}{2}
            \node[circle, fill=red, draw=red, scale=0.3] at (1.5,2.5) {};
    \end{tikzpicture}
        \begin{tikzpicture}[scale=0.5]
        \draw[white!20] (0,0) grid (1,3);
        \node[text width=0.5cm] at (0.5, 1.5) {$+$};
        \end{tikzpicture}
            \begin{tikzpicture}[scale=0.5]
        \draw[black!20] (0,0) grid (3,3);
        \Xmarking{1}{2}\Xmarking{2}{1}\Xmarking{3}{3}
        \Cross{2}{3}\Asmooth{1}{3}\EAST{1}{1}\UP{2}{2}
                    \node[circle, fill=red, draw=red, scale=0.3] at (1.5,2.5) {};
    \end{tikzpicture}
        \begin{tikzpicture}[scale=0.5]
        \draw[white!20] (0,0) grid (1,3);
        \node[text width=0.5cm] at (0.5, 1.5) {$+$};
        \end{tikzpicture}
    \begin{tikzpicture}[scale=0.5]
        \draw[black!20] (0,0) grid (3,3);
        \Xmarking{1}{3}\Xmarking{2}{2}\Xmarking{3}{1}
        \UP{2}{3}\EAST{1}{2}\UP{3}{2}\UP{3}{3}\EAST{1}{1}\EAST{2}{1}
    \end{tikzpicture}\\
    &\begin{tikzpicture}[scale=0.5]
        \draw[white!20] (0,0) grid (1,3);
        \node[text width=0.5cm] at (0.5, 1.5) {$=$};
        \end{tikzpicture}    \begin{tikzpicture}[scale=0.5]
        \draw[black!20] (0,0) grid (3,3);
        \Xmarking{1}{1}\Xmarking{2}{2}\Xmarking{3}{3}
        \Asmooth{1}{2}\Asmooth{1}{3}\Asmooth{2}{3}
    \end{tikzpicture}
        \begin{tikzpicture}[scale=0.5]
        \draw[white!20] (0,0) grid (1,3);
        \node[text width=0.5cm] at (0.5, 1.5) {$+$};
        \end{tikzpicture}
         \begin{tikzpicture}[scale=0.5]
        \draw[black!20] (0,0) grid (3,3);
        \Xmarking{1}{1}\Xmarking{2}{3}\Xmarking{3}{2}
        \Asmooth{1}{2}\Asmooth{1}{3}\EAST{2}{2}\UP{3}{3}
    \end{tikzpicture}
     \begin{tikzpicture}[scale=0.5]
        \draw[white!20] (0,0) grid (1,3);
        \node[text width=0.5cm] at (0.5, 1.5) {$+$};
        \end{tikzpicture}
   \begin{tikzpicture}[scale=0.5]
        \draw[black!20] (0,0) grid (3,3);
        \Xmarking{1}{2}\Xmarking{2}{1}\Xmarking{3}{3}
        \Asmooth{1}{3}\Asmooth{2}{3}
        \EAST{1}{1}\UP{2}{2}
    \end{tikzpicture}
        \begin{tikzpicture}[scale=0.5]
        \draw[white!20] (0,0) grid (1,3);
        \node[text width=0.5cm] at (0.5, 1.5) {$+$};
        \end{tikzpicture}
    \begin{tikzpicture}[scale=0.5]
        \draw[black!20] (0,0) grid (3,3);
        \Xmarking{1}{2}\Xmarking{2}{3}\Xmarking{3}{1}
        \Asmooth{1}{3}\EAST{2}{1}\EAST{1}{1}\UP{3}{3}\UP{3}{2}
    \end{tikzpicture}
        \begin{tikzpicture}[scale=0.5]
        \draw[white!20] (0,0) grid (1,3);
        \node[text width=0.5cm] at (0.5, 1.5) {$+$};
        \end{tikzpicture}
    \begin{tikzpicture}[scale=0.5]
        \draw[black!20] (0,0) grid (3,3);
        \Xmarking{1}{3}\Xmarking{2}{2}\Xmarking{3}{1}
        \UP{2}{3}\EAST{1}{2}\UP{3}{2}\UP{3}{3}\EAST{1}{1}\EAST{2}{1}
         \end{tikzpicture} . 
       \end{align*}  
       \caption{The process of resolving maximal crossings when $n=3$ \label{webperm}}
    \end{figure}
    
\begin{prop}[\text{See~\cite[Proposition~2.1]{HJO}}]
\label{web:uniq}
The whole process of resolving maximal crossings to get web permutations does not depend on the selection of maximal crossings. In addition,   all the resulting web permutations are distinct.
\end{prop}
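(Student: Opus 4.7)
The statement has two parts: confluence of the resolution process (the final decomposition is independent of the chosen maximal crossings), and distinctness (each web permutation arises from exactly one leaf of the resolution tree). My plan is to treat them separately via a standard rewrite-system approach: a local diamond lemma plus termination gives confluence, and an inductive bookkeeping along the resolution tree gives distinctness.

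For confluence, I would fix $G(\sigma,E)$ with two distinct maximal crossings $c_1=(i_1,j_1)$ and $c_2=(i_2,j_2)$ in $\Cr(\sigma)\setminus E$. Distinctness together with maximality forces them to be $\succeq$-incomparable, so we may assume $i_1<i_2$ and $j_1<j_2$. Applying \eqref{G()} at $c_1$ and then at $c_2$ produces a sum of (at most) four grid configurations, indexed by the independent smoothing/switching choices, and the analogous expansion in the reverse order yields four configurations as well. I would verify term-by-term that the two sums agree by a case analysis on whether the four symbols
$(i_1,\sigma_{i_1})$, $(\sigma^{-1}_{j_1},j_1)$, $(i_2,\sigma_{i_2})$, $(\sigma^{-1}_{j_2},j_2)$
are all distinct or whether some coincide (for instance $\sigma^{-1}_{j_1}=i_2$). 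Since every elementary operation strictly decreases $|\Cr(\sigma)\setminus E|$, the process terminates, and Newman's lemma lifts local confluence to global confluence of the full resolution procedure starting at $G(id_n,\emptyset)$.

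For distinctness I would prove by induction on $|\Cr(\sigma)\setminus E|$ that in the terminal expansion
$$
G(\sigma,E)\;=\;\sum_{\tau}\, c(\sigma,E;\tau)\,G(\tau,\Cr(\tau)),
$$
every coefficient $c(\sigma,E;\tau)$ lies in $\{0,1\}$. The base case $E=\Cr(\sigma)$ is immediate. For the inductive step, fix a maximal crossing $c=(i,j)$ and split via \eqref{G()} into a smoothing child $G(\sigma,E\cup\{c\})$ and a switching child $G(\sigma',E)$ with $\sigma'=\sigma\cdot t_{i,\ell}$. These two children reach disjoint families of leaves: every leaf descended from the smoothing branch has $(i,j)$ resolved as an elbow, while every leaf from the switching branch has the cell $(i,j)$ occupied by an $\times$. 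The maximality of $c$ is precisely what guarantees that no later switching operation can displace that $\times$ from position $(i,j)$, because such a displacement would require a crossing $\succeq c$ in a subsequent configuration, contradicting maximality. Hence the contributions of the two children do not overlap, and the inductive hypothesis yields $c(\sigma,E;\tau)\in\{0,1\}$ throughout; applied to $G(id_n,\emptyset)$ this gives the distinctness of the resulting web permutations.

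I expect the main obstacle to be the term-by-term matching in the local confluence step when $c_1$ and $c_2$ share a symbol (e.g.\ $\sigma^{-1}_{j_1}=i_2$, so only three distinct $\times$s are involved). In this degenerate situation the switching-then-switching branches must be compared with special care, and maximality plays a crucial role in excluding intermediate crossings that would otherwise introduce extra terms and break the diamond. Once confluence is in place, the distinctness portion of the argument is essentially bookkeeping along a single root-to-leaf path.
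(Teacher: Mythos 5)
The paper offers no proof of this proposition --- it is quoted from Hwang--Jang--Oh and the reader is referred to \cite[Proposition~2.1]{HJO} --- so there is nothing internal to compare against; your termination-plus-local-diamond strategy is the standard route and is sound. Two details deserve attention. The degenerate case you flag genuinely occurs (already for $G(id_3,\emptyset)$: after smoothing $(1,3)$, the maximal crossings $(1,2)$ and $(2,3)$ share the symbol at cell $(2,2)$); with $i_1<i_2$ and $j_1<j_2$ the only possible coincidence among the four symbols is $\sigma_{i_2}=j_1$, and the switch--switch branches then use \emph{different} transposition pairs, $t_{i_1,i_2}$ then $t_{i_2,\ell_2}$ versus $t_{i_2,\ell_2}$ then $t_{i_1,\ell_2}$ with $\ell_2=\sigma^{-1}_{j_2}$, whose products nevertheless coincide --- this identity is the actual content of the diamond. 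Both there and in your distinctness step you also need the observation that switching at a crossing $c$ only creates new crossings that are $\prec c$: it guarantees that the second maximal crossing remains maximal after the first is resolved, and, upgraded by induction to all later stages, it shows that no crossing $\succ c$ ever becomes available once $c$ has been resolved, so the $\times$ deposited at $c$ by a switch is never displaced. Your phrase ``contradicting maximality'' silently uses this persistence; with it supplied, both halves of the argument are correct.
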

Although the definition of web permutations in $\S_n$ is given from $G(id_n,\emptyset)$, we can also 
start from $G(\pi, \emptyset)$ for any non-identity permutation $\pi\in\S_n$, apply  
the same method   to resolve all maximal crossings to get some  configurations  $G(\sigma, \Cr(\sigma))$, 
and we refer to each corresponding permutation $\sigma$ as a {\bf web permutation from $\pi$} in $\S_n$.

%
%
%
%
%
%
%
%
%
%
%
%

For each $\sigma\in \S_n$,  consider the left and top boundaries of the grid  configuration $G(\sigma,\Cr(\sigma))$. 
Label the $n$ segments in the left  boundary   as $0, 1,\dots,n-1$  from bottom to top  and the $n$ segments in the top boundary   as $n,n+1,\ldots,2n-1$ from left to right.  Then the $n$ arcs in  
$G(\sigma, \Cr(\sigma))$ induce a matching of these $2n$ labels that is denoted by $M(\sigma)$. See Fig.~\ref{match} for an example when $\sigma=1324\in\Web_4$.
 
  \begin{figure}[h]
   \centering
   \begin{tikzpicture}[scale=0.5]
   
    \node[left] at (0,-1.5) {0};
   \node[left] at (0,-0.5) {1};
   \node[left] at (0,0.5) {2};
   \node[left] at (0,1.5) {3};
 
   \node[above] at (0.5,2) {4};
   \node[above] at (1.5,2) {5};
   \node[above] at (2.5,2) {6};
   \node[above] at (3.5,2) {7};
   \draw[black!20] (0,-2) grid (4,2);
   \Xmarking{4}{2}\Xmarking{2}{1}\Xmarking{3}{0}\Xmarking{1}{-1}
 
   \Asmooth{1}{0} \Asmooth{1}{1}
   \UP{3}{1}
   \EAST{2}{0}
   \Asmooth{1}{2} \Asmooth{2}{2}\Asmooth{3}{2}

    \Matching{7}{8}\Matching{9}{12}\Matching{10}{11}\Matching{13}{14}
   
   \foreach \i in {0,...,7}{
     \draw [fill] (\i+7,0) circle [radius=0.05] ;
     \node[below] at(\i+7,0) {\i};
   }
   \end{tikzpicture}
   \caption{The matching $M(1324)=\{\{0,1\},\{2,5\}, \{3,4\},\{6,7\}\}$.}
     \label{match}
 \end{figure}
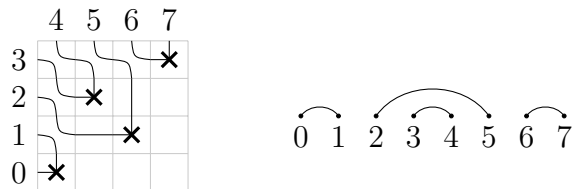
 
Recall that $\widetilde\Web_n:=\{\sigma\in\Web_n: M(\sigma)=M_0^{(n)}\}$, where $M_0^{(n)}$ is defined in~\eqref{eq:Mat0}. For example, we have $\widetilde\Web_3=\{123\}$, as only the first permutation in the bottom row of Fig.~\ref{webperm} has matching $M_0^{(3)}$. See Table~\ref{tab1} for 
permutations in $\widetilde\Web_n$ for $1\leq n\leq 6$.

\begin{table}
  \begin{tabular}{ll} 
          \hline\\[-2.5mm]
         $n=1$: &  1\\[1.8mm]
          $n=2$:& 12\\[1.8mm]
         $n=3$: &  123 \\[1.8mm]
         $n=4$:&  1234, 3412 \\[1.8mm]
        $n=5$:&12345, 14523, 34125 \\[1.8mm]
        $n=6$:&123456, 125634, 145236, 341256, 345612, 364512, 534612, 563412\\[1mm]
       \hline\\
      \end{tabular}
      \caption{Web permutations in $\widetilde\Web_n$ for $1\leq n\leq 6$.\label{tab1}}
  \end{table}

Although we can prove Conjecture~\ref{conj:hwang}, we have no idea how to characterize web permutations of $[n]$ with $M(\sigma)=M_0^{(n)}$. 
In view of Theorem~\ref{thm:huang}, we pose the following open problem. 

\begin{?}
Characterize web permutations of $[n]$ with $M(\sigma)=M_0^{(n)}$. 
\end{?}

%
%
%

\subsection{Chord diagrams}
 A set of chords of a circle having no common end-vertex is called a \emph{chord diagram} (see Fig.~\ref{AN} for two examples). 
A chord diagram is called an \emph{n-crossing} if it consists of a set of $n$ mutually crossing chords.
A 2-crossing is simply called a crossing as well. A chord diagram is called  \emph{nonintersecting} if it contains no crossing.  

Let $V$ be a set of $2n$ vertices on a circle, and let $E$ be a chord diagram consisting of  chords whose end-vertices are exactly $V$.
 In this case, $V$ is called a {\em support} of $E$.
Denote by $\mathcal{CD}(V)$ the set of all chord diagrams having $V$ as a support. Let
$x_1, x_2, x_3, x_4 \in  V$ be placed on a circle in clockwise order. Let $E \in \mathcal{CD}(V)$ and $S = \{x_1x_3, x_2x_4\}$ be a crossing in $E$. 
\emph{The expansion of $E$ with respect to $S$} is defined as a replacement of $E$ with $E_1 = (E \backslash S) \cup S_1$
or $E_2 = (E \backslash S) \cup S_2$, where $S_1 =\{x_1x_2, x_3x_4\}$ and $S_2 =\{x_2x_3, x_4x_1\} $ (see Fig.~\ref{2crossing}).
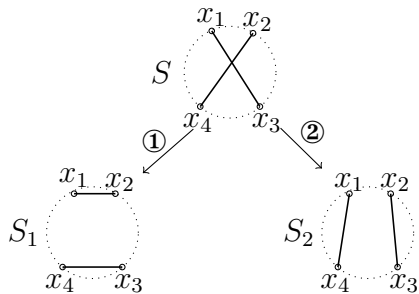
\begin{figure}[h]
\begin{tikzpicture}[scale=0.61][thick]
\draw[dotted](4,1) circle [radius=1];
\draw(4.65,0.25) circle [radius=0.06];
\draw(3.35,0.25) circle [radius=0.06];
\draw(4.5,1.85) circle [radius=0.06];
\draw(3.6,1.9) circle [radius=0.06];
\draw[semithick](4.65,0.25) - - (3.6,1.9);
\draw[semithick](3.35,0.25) - - (4.5,1.85);
\node (A) at (3.6,2.2) {$x_1$};
\node (C) at (4.6,2.1) {$x_2$};
\node (C) at (4.8,-0.1) {$x_3$};
\node (C) at (3.3,-0.1) {$x_4$};
\node (D) at (2.5,1) {$S$};
\draw[thin][-to] (3.2,-0.24) - - (2.1,-1.2);
\draw[thin][-to] (5.1,-0.22) - - (6,-1.1);
\node (D) at (2.35,-0.5) {\ding{172}};
\node (D) at (5.8,-0.3) {\ding{173}};

\draw[dotted](1,-2.5) circle [radius=1];
\draw(1.65,-3.25) circle [radius=0.06];
\draw(0.35,-3.25) circle [radius=0.06];
\draw(1.5,-1.65) circle [radius=0.06];
\draw(0.6,-1.65) circle [radius=0.06];
\node (A) at (0.6,-1.3) {$x_1$};
\node (C) at (1.6,-1.4) {$x_2$};
\node (C) at (1.8,-3.6) {$x_3$};
\node (C) at (0.3,-3.6) {$x_4$};
\node (D) at (-0.5,-2.5) {$S_1$};
\draw[semithick] (1.65,-3.25) - - (0.35,-3.25);
\draw[semithick] (1.5,-1.65) - - (0.6,-1.65);

\draw[dotted](7,-2.5) circle [radius=1];
\draw(7.65,-3.25) circle [radius=0.06];
\draw(6.35,-3.25) circle [radius=0.06];
\draw(7.5,-1.65) circle [radius=0.06];
\draw(6.6,-1.65) circle [radius=0.06];
\node (A) at (6.6,-1.4) {$x_1$};
\node (C) at (7.6,-1.4) {$x_2$};
\node (C) at (7.8,-3.6) {$x_3$};
\node (C) at (6.3,-3.6) {$x_4$};
\node (D) at (5.5,-2.5) {$S_2$};
\draw[semithick] (7.65,-3.25) - - (7.5,-1.65);
\draw[semithick]  (6.35,-3.25)- - (6.6,-1.65);
\end{tikzpicture}
 \caption{The expansion of a chord diagram with respect to $S$ (the chords
not in $S$ are not drawn). \label{2crossing}}
\end{figure}

Starting with a chord diagram $E \in \mathcal{CD}(V)$, we can get a binary tree as follows. First of all, let $E$ be the root. Arbitrarily select a crossing $S$ of $E$ and expand $E$ with respect to $S$, adding the results of two chord diagrams $E_1$ and $E_2$ as
children of $E$. Then expand $E_1$ into two chord diagrams with respect to any crossing in $E_1$ if it exists, and do the same with $E_2$.
Repeat this procedure until all chord diagrams are nonintersecting. Denote by $\mathcal{NCD}(E)$ the multiset of nonintersecting chord diagrams generated from $E$. 

\begin{lem}[See~{\cite[Lemma 1]{TN16}}]
Let $E\in \mathcal {CD}(V)$ be a chord diagram. Then beginning from $E$, the resulting multiset $\mathcal{NCD}(E)$ of nonintersecting chord diagrams is independent of the selection of the crossing at each step.
\end{lem}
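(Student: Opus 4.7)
The plan is to apply Newman's diamond lemma: a terminating, locally confluent rewriting system is globally confluent. Here the ``system'' replaces a chord diagram $E$ containing a crossing $S$ by one of its two expansions $E_1$ or $E_2$, and the desired statement is exactly global confluence, asserting that the multiset $\mathcal{NCD}(E)$ of nonintersecting normal forms does not depend on the order in which crossings are resolved.

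First I would establish termination by proving that the total number of crossings $n(E)$ strictly decreases under every expansion. The crossing $S=\{x_1x_3,x_2x_4\}$ itself is eliminated in both $S_1$ and $S_2$, since in each case the two replacement chords do not cross. For any third chord $c=yz$, a short case analysis on which of the four open arcs $(x_1,x_2)$, $(x_2,x_3)$, $(x_3,x_4)$, $(x_4,x_1)$ contain $y$ and $z$ shows that the number of chords of $S$ crossed by $c$ is always at least the number of chords of $S_1$ (or $S_2$) crossed by $c$; in other words, expansions create no new crossings. Consequently, $n(E_i)\leq n(E)-1$ for $i=1,2$, and any reduction sequence halts in at most $n(E)$ steps.

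Next I would verify local confluence: for any two distinct crossings $S$ and $S'$ in $E$, the two one-step expansions can be completed to a common multiset of normal forms. If $S$ and $S'$ share no chord, the expansions affect disjoint pairs of chords and therefore commute, closing the diamond with one further expansion on each branch. If $S$ and $S'$ share exactly one chord, the diamond does not close in one step because each expansion destroys the other crossing. However, the three chords of $S\cup S'$ involve only six endpoints on the circle, and a direct enumeration of cyclic orderings shows that after fully expanding the remaining crossings among these three chords, both branches yield the same multiset of nonintersecting three-chord configurations; this is the ``three-chord identity''. The case $S=S'$ is trivial. Combined with termination, Newman's lemma then gives global confluence, which is precisely the statement of the lemma.

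The principal obstacle is the one-chord-shared case of local confluence: although it reduces to a finite combinatorial identity on six cyclically ordered points, one must carefully isolate the three-chord subdiagram from the remaining chords of $E$ and argue that every ``external'' chord of $E$ contributes identically to both branches. The cleanest route is to freeze all chords lying outside $S\cup S'$ and reduce the verification to a purely three-chord check, which can then be performed by exhausting the (finitely many) cyclic arrangements of the six relevant endpoints.
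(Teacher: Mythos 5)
The paper does not prove this lemma at all --- it is imported verbatim from Nakamigawa \cite[Lemma~1]{TN16} --- so there is no in-paper argument to compare against; your proposal has to be judged against the source. Your strategy (termination plus local confluence implies uniqueness of the multiset of normal forms) is sound and is essentially the proof in \cite{TN16}: the termination step is correct as you state it, since your four-arc case analysis does show that a third chord never crosses more chords of $S_1$ or $S_2$ than of $S$, so every path in the expansion tree has length at most the initial number of crossings; and the disjoint-crossing case of local confluence does close in the obvious way.

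Two points deserve care before this counts as a complete proof. First, Newman's lemma in its usual form concerns a rewriting relation on single objects with a unique normal form, whereas here one step replaces a diagram $E$ by the \emph{pair} $E_1,E_2$ and the conclusion is about a multiset of normal forms. You need the (routine, but not literally Newman's) multiset or linear version: work with the induced rewriting on finite multisets of diagrams, take for termination a measure such as $\sum_E 3^{n(E)}$ (which strictly decreases since $3^{n(E_1)}+3^{n(E_2)}\le 2\cdot 3^{n(E)-1}$), and check local confluence only for two steps applied to the same element of the multiset. Second, and more importantly, the entire content of the lemma sits in the one-shared-chord case, which you correctly reduce to a finite check on three chords (freezing the external chords is legitimate, since joinability --- not normality --- is all that local confluence requires, and identical three-chord multisets lift to identical full-diagram multisets). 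But you assert the outcome of that finite enumeration rather than performing it; there are essentially two configurations to verify (the three chords mutually crossing, or $a$ and $c$ disjoint with both crossing $b$), each with a small expansion tree, and until those trees are actually written out and matched the proof is a correct plan rather than a proof. With that computation supplied, the argument is complete and coincides with the standard one.
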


For $0\leq k \leq n$, let $A(n, k)$ be a chord diagram with $n+1$ chords, where $n$-chords intersect to form an $n$-crossing, and another chord  intersects only $k$ of these $n$ chords; see Fig.~\ref{AN} (left) for an example. 
Obviously, $A(n,n)$ is simply an $(n+1)$-crossing and $A(n, 0)$ is the union of an $n$-crossing and an isolated chord. 
A chord $e$ in a nonintersecting chord diagram $F$ is called an \emph{ear} if there is no other chord inside the chord $e$ or intersects with $e$. 
If all the $n$ chords of a nonintersecting chord diagram $F$ are ears, then we call $F$ an \emph{$n$-necklace} and denote it as $N_n$, see Fig.~\ref{AN} (right).

\begin{figure}
\begin{tikzpicture}[scale=0.65][thick]
\draw[dotted](1.6,1.6) circle [radius=1.6];
\draw(2.4,3) circle [radius=0.08];
\draw(3.13,2.1) circle [radius=0.08];
\draw(3.16,1.8) circle [radius=0.08];
\draw(3.1,1.1) circle [radius=0.08];
\draw(2.75,0.5) circle [radius=0.08];
\draw(2.4,0.16) circle [radius=0.08];
\draw(1.6,0) circle [radius=0.08];
\draw(1.6,3.2) circle [radius=0.08];
\draw(0.8,3.02) circle [radius=0.08];

\draw[semithick] (2.4,3) - - (2.75,0.5);
\draw(0.095,1.1) circle [radius=0.08];
\draw[semithick] (3.13,2.1) - - (0.095,1.1);
\draw(0.03,1.4) circle [radius=0.08];
\draw[semithick] (3.16,1.8) - - (0.03,1.4);
\draw(0.1,2.1) circle [radius=0.08];
\draw[semithick] (3.1,1.1) - - (0.1,2.1);
\draw[semithick] (1.6,0) - - (1.6,3.2);
\draw[semithick] (2.4,0.16) - - (0.8,3.02);

\node (A) at (2.7,3.1) {$v_0$};
\node (A) at (3.46,2.15) {$v_1$};
\node (A) at (3.5,1.75) {$v_2$};
\node (A) at (3.45,1) {$v_3$};
\node (A) at (3.05,0.4) {$v_{4}$};
\node (A) at (2.6,-0.1) {$v_{5}$};
\node (A) at (1.6,-0.3) {$v_{6}$};

\draw[dotted](7.6,1.6) circle [radius=1.6];
\draw(7.2,3.15) circle [radius=0.08];
\draw(8,3.15) circle [radius=0.08];
\draw(6.5,2.75) circle [radius=0.08];
\draw(6.16,2.25) circle [radius=0.08];
\draw(6.14,1) circle [radius=0.08];
\draw(6.48,0.45) circle [radius=0.08];
\draw(7.25,0.05) circle [radius=0.08];
\draw(8,0.05) circle [radius=0.08];
\draw(8.7,2.75) circle [radius=0.08];
\draw(9.04,2.25) circle [radius=0.08];
\draw(9.06,1) circle [radius=0.08];
\draw(8.72,0.45) circle [radius=0.08];

\draw[semithick] (7.2,3.15) - - (8,3.15);
\draw[semithick](6.5,2.75) - - (6.16,2.25);
\draw[semithick] (6.14,1)  - -(6.48,0.45) ;
\draw[semithick] (7.25,0.05) - - (8,0.05);
\draw[semithick] (8.7,2.75) - -(9.04,2.25) ;
\draw[semithick] (9.06,1) - - (8.72,0.45);
\end{tikzpicture}
 \caption{$A(n,k)$ with $n=5$, $k=3$, and $N_n$ with $n=6$.\label{AN}}
\end{figure}
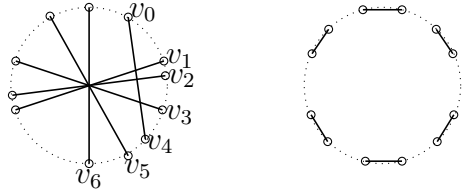

Let $v_0, v_1,... , v_{2n+1}$ be a sequence of vertices arranged in a counterclockwise direction around the circumference, and let $V = \{v_i: 0 \leq i \leq 2n + 1\}$. 
Denote by $N_{n+1,k}^+$ (resp., $N_{n+1,k}^-$) the $(n + 1)$-necklace in $\mathcal{CD}(V )$ which contains the ear $v_kv_{k+1}$ (resp., $v_{k+1}v_{k+2}$). 
Let $E$ and $F$ be two chord diagrams with a common support and $F$ is nonintersecting.  Denote by $m(E,F)$ the multiplicity of $F$ in $\mathcal{N CD}(E)$. 
For convenience, we write  $b_{n,k}^+ =m(A(n, k),N_{n+1,k}^+)$ and $b_{n,k}^- =m(A(n, k),N_{n+1,k}^-)$. 

In our proof of conjecture~\ref{conj:hwang}, we need the following results proved by Nakamigawa~\cite{TN}. 
\begin{lem}[{See~\cite[Lemma 3.2]{TN}}]\label{b+b-}
 For $1\leq k\leq n$, we have
$$b_{n,k}^-=b_{n,k-1}^+.$$
\end{lem}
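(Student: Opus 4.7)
The plan is to apply the order-invariance of the expansion process (Lemma~1 of~\cite{TN16}) and expand a single carefully chosen crossing of $A(n,k)$ first. Fix the standard realization of $A(n,k)$ on $V$ in which the extra chord is $e=v_0v_{k+1}$ and the $n$-crossing uses the remaining $2n$ vertices with antipodal chords in the induced cyclic order. Then the chord $c=v_kv_{n+k+1}$ is one of the $n$-crossing chords, and since the four endpoints $v_0,v_k,v_{k+1},v_{n+k+1}$ alternate around the circle, $c$ crosses $e$.

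Expanding the crossing $S=\{e,c\}$ produces two children
\begin{align*}
E_1 &= (A(n,k)\setminus S)\cup\{v_0v_k,\ v_{k+1}v_{n+k+1}\},\\
E_2 &= (A(n,k)\setminus S)\cup\{v_0v_{n+k+1},\ v_kv_{k+1}\},
\end{align*}
which I will handle asymmetrically. In $E_2$ the chord $v_kv_{k+1}$ has adjacent endpoints, hence is an ear that participates in no crossing and therefore persists unchanged through all subsequent expansions. But $N^-_{n+1,k}$ contains the ear $v_{k+1}v_{k+2}$, so it matches $v_{k+1}$ with $v_{k+2}$ and cannot contain $v_kv_{k+1}$. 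Consequently $m(E_2,N^-_{n+1,k})=0$, and every contribution to $b^-_{n,k}$ comes from $E_1$.

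It then remains to identify $E_1$ with the standard realization of $A(n,k-1)$ whose extra chord is $v_0v_k$. A direct arc count verifies that the untouched chords together with $v_{k+1}v_{n+k+1}$ form an $n$-crossing on $V\setminus\{v_0,v_k\}$ in the induced cyclic order, and the remaining chord $v_0v_k$ crosses exactly $k-1$ of them. Since $N^-_{n+1,k}$ and $N^+_{n+1,k-1}$ are literally the same necklace (both $v_{k+1}v_{k+2}$ and $v_{k-1}v_k$ belong to the unique necklace whose ears share the parity of $k+1$), we conclude
\[
b^-_{n,k}=m(A(n,k),N^-_{n+1,k})=m(E_1,N^-_{n+1,k})=m(A(n,k-1),N^+_{n+1,k-1})=b^+_{n,k-1}.
\]

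The main obstacle is choosing the right first crossing: expanding an arbitrary crossing internal to the $n$-crossing would not separate the target necklace from one of the children and thus yields no clean recursion. Once the crossing $\{e,c\}$ is isolated, the ear-persistence vanishing in $E_2$ and the identification of $E_1$ with $A(n,k-1)$ are both straightforward arc-counting verifications, with mild care needed at the boundary cases $k=1$ and $k=n$ where the relevant arcs degenerate.
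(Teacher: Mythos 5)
The paper does not actually prove this lemma---it is imported verbatim from Nakamigawa \cite[Lemma~3.2]{TN}---so there is no in-paper argument to compare against; your proof is, however, correct and is essentially the standard (Nakamigawa's) one. The two steps you leave as ``direct arc counts'' do check out: with the $n$-crossing realized as the antipodal matching on $V\setminus\{v_0,v_{k+1}\}$ (namely $v_iv_{i+n+1}$ for $1\le i\le k$ and $v_iv_{i+n}$ for $k+2\le i\le n+1$), the chord $c=v_kv_{n+k+1}$ is indeed one of its chords and crosses $e=v_0v_{k+1}$; the child $E_1$ is exactly the standard realization of $A(n,k-1)$ (antipodal matching on $V\setminus\{v_0,v_k\}$ together with $v_0v_k$, which meets $k-1$ of its chords); the parities of $k-1$ and $k+1$ agree, so $N^-_{n+1,k}=N^+_{n+1,k-1}$; and since the ear $v_kv_{k+1}$ persists in every leaf below $E_2$, the order-invariance of the expansion (Lemma~1 of \cite{TN16}) gives $b^-_{n,k}=m(E_1,N^-_{n+1,k})+m(E_2,N^-_{n+1,k})=m(A(n,k-1),N^+_{n+1,k-1})=b^+_{n,k-1}$, with the boundary cases $k=1$ and $k=n$ causing no trouble.
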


\begin{thm}[{Nakamigawa~\cite[Theorem 3.1]{TN}}]\label{bs}
  For $n\geq 1$, we have
  $$\begin{cases}
   b_{2n-1,k}^+=s_{2n,n-\lfloor k/2\rfloor}\qquad \mbox{ if }~ 0\leq k\leq 2n-1;\\
   b_{2n,k}^+=s_{2n+1,\lfloor k/2\rfloor +1}\qquad \mbox{ if } ~ 0\leq k\leq 2n.
 \end{cases}$$
 \end{thm}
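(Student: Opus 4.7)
The plan is to prove Theorem~\ref{bs} by induction on $n$, establishing a recurrence for $b_{n,k}^+$ from the chord-expansion process and matching it term by term with the defining recurrence of the Seidel triangle. The split of the theorem into the two cases $n = 2n'-1$ and $n = 2n'$ corresponds exactly to the two parity-dependent rules in the Seidel recurrence, so both identities should follow from one inductive scheme.

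First, I would fix a canonical drawing of $A(n,k)$: place the $2n+2$ vertices $v_0, v_1, \ldots, v_{2n+1}$ counterclockwise on the circle, take the $n$-crossing to be the chords $\{v_i, v_{n+i}\}$ for $1 \leq i \leq n$, and take the extra chord to be $\{v_0, v_{k+1}\}$, which by a direct check intersects exactly the first $k$ of the $n$-crossing chords. Since Nakamigawa's independence lemma guarantees that $\mathcal{NCD}(A(n,k))$ does not depend on the expansion order, I may select a single convenient crossing to expand first --- natural candidates being the outermost crossing between the extra chord and $\{v_k, v_{n+k}\}$, or the crossing between the two adjacent $n$-crossing chords $\{v_1, v_{n+1}\}$ and $\{v_2, v_{n+2}\}$.

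Second, I would trace how that expansion decomposes $A(n,k)$ into two chord diagrams: each should either (i) reduce to $A(n-1, k')$ together with an already nonintersecting isolated piece, contributing a multiplicative factor of $1$, or (ii) equal $A(n, k\pm 1)$ or its mirror image. Invoking Lemma~\ref{b+b-} to trade $b_{n,k}^-$ for $b_{n, k-1}^+$ whenever a mirrored necklace arises, I expect to obtain a two-term recurrence for $b_{n,k}^+$ that exactly matches either $s_{2i+1,j} = s_{2i+1, j-1} + s_{2i, j}$ or $s_{2i, j} = s_{2i, j+1} + s_{2i-1, j}$ under the index correspondence asserted by the theorem.

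Finally, I would check the base cases directly: $A(0,0)$ is a single chord so $b_{0,0}^+ = 1 = s_{1,1}$; $A(1,0)$ and $A(1,1)$ give entries in rows two and three of the Seidel triangle; and the boundary conditions $b_{n,k}^+ = 0$ outside the valid range match $s_{i,j} = 0$ for $j \leq 0$ or $j > \lceil i/2 \rceil$. The main obstacle is step two: precisely tracking through each expansion which ear ($v_k v_{k+1}$ for $N_{n+1, k}^+$ versus $v_{k+1} v_{k+2}$ for $N_{n+1, k}^-$) ends up in the target necklace, and reconciling this with the index shifts $n - \lfloor k/2 \rfloor$ and $\lfloor k/2 \rfloor + 1$ in the two parts of the theorem. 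Strengthening the induction to simultaneously track both $b_{n,k}^+$ and $b_{n,k}^-$ with their explicit Seidel translations --- so that Lemma~\ref{b+b-} enters transparently into the recurrence --- should streamline this bookkeeping.
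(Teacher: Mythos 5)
The paper does not prove this statement: Theorem~\ref{bs} is quoted from Nakamigawa~\cite[Theorem 3.1]{TN} and used as a black box (together with Theorem~\ref{fb} and Lemma~\ref{b+b-}) to deduce Conjecture~\ref{conj:hwang}. So there is no internal proof to compare against, and your proposal has to be judged as a reconstruction of Nakamigawa's argument.

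As such, the overall strategy is the right one --- expand a single well-chosen crossing of $A(n,k)$, read off a two-term recurrence for $b^{\pm}_{n,k}$, trade $b^-$ for $b^+$ via Lemma~\ref{b+b-}, and match the result with the parity-split Seidel recurrence by induction. But the write-up stops exactly where the content begins. Step two is stated as an expectation (``I expect to obtain a two-term recurrence''), and it is not automatic: for a generic crossing of $A(n,k)$ neither child of the expansion is of the form $A(n',k')$. The crossing must be the one between the special chord $e$ and the extremal chord of the $n$-crossing that $e$ meets; then one child contains a forced ear (so its contribution reduces to that of a smaller diagram $A(n-1,\cdot)$ with the ear deleted), while the other child is $A(n,k-1)$ up to reflection, which is where Lemma~\ref{b+b-} enters. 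One must then verify that the forced ear is compatible with the target necklace $N^+_{n+1,k}$ rather than $N^-_{n+1,k}$ --- this is precisely what separates $b^+$ from $b^-$ and produces the index shifts $n-\lfloor k/2\rfloor$ versus $\lfloor k/2\rfloor+1$ --- and you explicitly flag this as ``the main obstacle'' without resolving it. There is also a small indexing slip in your canonical model: with the $n$-crossing $\{v_iv_{n+i}:1\le i\le n\}$ and extra chord $v_0v_{k+1}$, the case $k=n$ makes the extra chord share the endpoint $v_{n+1}$ with the chord $v_1v_{n+1}$, and the vertex $v_{2n+1}$ is never used; since the support has $2n+2$ vertices, the $n$-crossing should occupy $v_1,\dots,v_n,v_{n+2},\dots,v_{2n+1}$ or similar. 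These defects are fixable, but as written the proposal is an outline of the intended induction rather than a proof.
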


\subsection{Proof of conjecture~\ref{conj:hwang}}

For a given permutation $\pi\in\S_n$, 
let 
$$h(\pi):=|\{\text{web permutations}~\sigma~\text{from}~\pi: M(\sigma)=M_0^{(n)}\}|.$$
For any $1\leq k\leq n$, introduce the transformation  $p_k:\S_n\to\S_n$ by 
$$p_k(\sigma):=\sigma_2\sigma_3\cdots\sigma_k\sigma_1\sigma_{k+1}\cdots\sigma_n.$$
For example, $p_4(id_6)=234156$. Note that $p_1(\sigma)=\sigma$.  


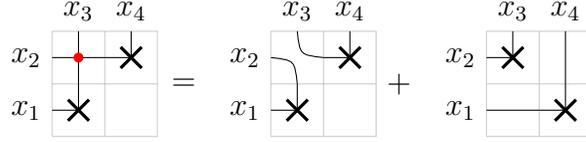
\begin{figure}
\begin{align*}
    \begin{tikzpicture}[scale=0.7]
        \draw[black!20] (0,0) grid (2,2);
        \Xmarking{1}{1}\Xmarking{2}{2}
        \Cross{1}{2}
            \node[circle, fill=red, draw=red, scale=0.3] at (0.5,1.5) {};
  \node[left] at (0,0.5) {$x_1$};
  \node[left] at (0,1.5) {$x_2$};
  \node[above] at (0.5,2) {$x_3$};
  \node[above] at (1.5,2) {$x_4$};
    \end{tikzpicture}
  &\begin{tikzpicture}[scale=0.7]
        \draw[white!20] (0,0) grid (1,3);
        \node[text width=0.5cm] at (0.6, 1) {$=$};
        \end{tikzpicture}
    \begin{tikzpicture}[scale=0.7]
        \draw[black!20] (0,0) grid (2,2);
        \Xmarking{1}{1}\Xmarking{2}{2}
        \Asmooth{1}{2}
         \node[left] at (0,0.5) {$x_1$};
  \node[left] at (0,1.5) {$x_2$};
  \node[above] at (0.5,2) {$x_3$};
  \node[above] at (1.5,2) {$x_4$};
         \end{tikzpicture}
    \begin{tikzpicture}[scale=0.7]
        \draw[white!20] (0,0) grid (1,3);
        \node[text width=0.5cm] at (0.5, 1) {$+$};
        \end{tikzpicture}
    \begin{tikzpicture}[scale=0.7]
        \draw[black!20] (0,0) grid (2,2);
        \Xmarking{1}{2}\Xmarking{2}{1}
        \UP{2}{2}\EAST{1}{1}
         \node[left] at (0,0.5) {$x_1$};
  \node[left] at (0,1.5) {$x_2$};
  \node[above] at (0.5,2) {$x_3$};
  \node[above] at (1.5,2) {$x_4$};
    \end{tikzpicture}
       \end{align*}  
       \caption{Resolve a crossing  by smoothing and switching.   \label{n=2}}
    \end{figure}

\begin{lem}\label{hlem}
For $1\leq k\leq n$, we have
$$ h(p_k(id_n))=b_{n-1,n-k-1}^+.$$
\end{lem}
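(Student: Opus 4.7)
The plan is to reduce the lemma to a chord-diagram multiplicity via an explicit bijection between grid resolution and chord expansion, then invoke Nakamigawa's enumerative results. First, I associate to each $\sigma\in\S_n$ the chord diagram $D(\sigma)$ on the $2n$ boundary segments of the grid, viewed as cyclically ordered vertices $0,1,\ldots,2n-1$ on a circle: for each $\times$ at $(i,\sigma_i)$, draw a chord between left-boundary segment $\sigma_i-1$ and top-boundary segment $n+i-1$. An examination of the $\Asmooth$ tile together with the definition of switching shows that a crossing at cell $(i,j)$ in the grid is exactly a crossing of the two relevant chords in $D(\sigma)$, and that the smoothing and switching operations produce precisely the two non-crossing re-pairings of the four involved endpoints, i.e.~the two expansions $E\to E_1$ and $E\to E_2$ of Fig.~\ref{2crossing}. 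Combined with Proposition~\ref{web:uniq}, this gives a bijection $\sigma^*\mapsto M(\sigma^*)$ between the web permutations from $\sigma$ and $\mathcal{NCD}(D(\sigma))$, hence
\[
h(\sigma)\;=\;m\bigl(D(\sigma),M_0^{(n)}\bigr).
\]

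A direct computation of $D(p_k(id_n))$ from $p_k(id_n)=2,3,\ldots,k,1,k+1,\ldots,n$ shows that the chords are the ``special'' chord $\{0,n+k-1\}$, together with $\{i,n+i-1\}$ for $1\le i\le k-1$ (each nested inside the special chord) and $\{j,n+j\}$ for $k\le j\le n-1$ (each crossing the special chord). A routine check confirms that these $n-1$ non-special chords form an $(n-1)$-crossing and that the special chord crosses exactly $n-k$ of them, so $D(p_k(id_n))$ realises $A(n-1,n-k)$ in Nakamigawa's sense. Matching the arc lengths forces an orientation-reversing identification of vertices $i\leftrightarrow v_{(2n-i)\bmod 2n}$, under which the ear $\{0,1\}$ of $M_0^{(n)}$ becomes $\{v_{2n-1},v_0\}$; the target necklace is then $N_B$, the $n$-necklace whose ears are $\{v_1,v_2\},\{v_3,v_4\},\ldots,\{v_{2n-1},v_0\}$.

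It remains to show $m(A(n-1,n-k),N_B)=b_{n-1,n-k-1}^+$. Write $j:=n-k$. If $j$ is even, then the ear $v_{j+1}v_{j+2}$ of $N_{n,j}^-$ lies in $N_B$, so $N_B=N_{n,j}^-$ and the multiplicity equals $b_{n-1,j}^-$, which equals $b_{n-1,j-1}^+$ by Lemma~\ref{b+b-}. If $j$ is odd, then the ear $v_jv_{j+1}$ of $N_{n,j}^+$ lies in $N_B$, so $N_B=N_{n,j}^+$ and the multiplicity equals $b_{n-1,j}^+$; by Theorem~\ref{bs} the coefficient $b_{n-1,k}^+$ depends only on $\lfloor k/2\rfloor$, so $b_{n-1,j}^+=b_{n-1,j-1}^+$. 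In either case the stated identity holds.

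The main obstacle is the final parity-dependent step: the vertex identification between $D(p_k(id_n))$ and Nakamigawa's canonical $A(n-1,n-k)$ forces $M_0^{(n)}$ onto a necklace whose $\pm$-role alternates with the parity of $n-k$, so Lemma~\ref{b+b-} alone does not suffice and one must invoke the pairing $b_{n-1,2i}^+=b_{n-1,2i+1}^+$ visible in Theorem~\ref{bs} to handle the odd case. The bijective correspondence between grid resolution and chord expansion, and the computation of $D(p_k(id_n))$, are both straightforward verifications once the setup is in place.
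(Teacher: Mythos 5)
Your proof is correct and follows essentially the same route as the paper: translate the grid resolution into Nakamigawa's chord-diagram expansion, identify $G(p_k(id_n),\emptyset)$ with $A(n-1,n-k)$, and read off the multiplicity of the target necklace. The only divergence is your choice of vertex identification $i\leftrightarrow v_{(2n-i)\bmod 2n}$, which is the reflection (through the special chord) of the identification the paper uses, namely $i\leftrightarrow v_{(n-k+1+i)\bmod 2n}$; under the paper's labelling the ear $\{0,1\}$ lands on $v_{n-k+1}v_{n-k+2}$, so the target necklace is $N_{n,n-k}^-$ for every parity and one concludes directly via $h(p_k(id_n))=b_{n-1,n-k}^-=b_{n-1,n-k-1}^+$ from Lemma~\ref{b+b-}, with no case analysis. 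Your extra parity step is nevertheless sound: both identifications are genuine isomorphisms onto $A(n-1,n-k)$ (they differ by a symmetry of that diagram), and your use of the identity $b_{n-1,j}^+=b_{n-1,j-1}^+$ for odd $j$, extracted from Theorem~\ref{bs}, correctly closes the odd case. So the argument stands; it is just slightly longer than necessary because of the labelling convention.
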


\begin{proof}
 Recall that $h(p_k(id_n))$ is equal to the number of web permutations $\sigma$ from $p_{k}(id_{n})$ with $M(\sigma)=M_0^{(n)}$. 
 These web permutations $\sigma$ are obtained as follows: 
 \begin{itemize}
 \item start from the grid configuration $G(p_{k}(id_{n}),\emptyset)$ and resolve a maximal crossing at a time by smoothing or switching until all crossings are resolved; 
\item   find out the corresponding permutations $\sigma$ in the resulting configurations $G(\sigma, \Cr(\sigma))$ such that $M(\sigma)=M_0^{(n)}$.
 \end{itemize}
If we view the boundary of a grid configuration as a circumference, then resolving a crossing by smoothing (resp., switching) is equivalent to 
 expanding a crossing in a chord diagram by step~\ding{172} (resp.,~step~\ding{173}); compare Fig.~\ref{n=2} with Fig.~\ref{2crossing}. 
Moreover, the grid configuration $G(p_{k}(id_{n}), \emptyset)$ in Fig.~\ref{A(n-1,n-k-1)} (left) is in correspondence with the  chord diagram $A(n-1,n-k)$ in Fig.~\ref{A(n-1,n-k-1)} (right).
Hence each web permutation $\sigma$ from $p_{k}(id_{n})$ with $M(\sigma)=M_0^{(n)}$ corresponds to a necklace $N_n$ with a chord $01$, that is an ear $v_{n-k+1}v_{n-k+2}$ in $\mathcal{NCD}(A(n-1, n-k))$.
 Thus,  
 $$h(p_k(id_n))=m(A(n-1, n-k),N_{n,n-k}^-)=b_{n-1,n-k}^-.
 $$
 The result then follows from  Lemma~\ref{b+b-}.
\end{proof}

\begin{figure}
\begin{align*}
  \begin{tikzpicture}[scale=0.85]
  \node[left,scale=0.6] at (10.85,-1.5) {0};
  \node[left,scale=0.6] at (10.85,-0.6) {1};
  \node[left,scale=0.6] at (11,0.5) {$k$-1};
   \node[left,scale=0.65] at (10.85,1.46) {$k$};
   \node[left,scale=0.6] at (11,2.5) {$n$-2};
  \node[left] at (10.9,0.1) {$\vdots$};
  \node[left] at (10.9,2.1){$\vdots$};
  \node[left,scale=0.6] at (11,3.5) {$n$-1};
  \node[above,scale=0.6] at (11.4,4) {$n$};
  \node[above,scale=0.8] at (12,3.9) {$\cdots$};
  \node[above,scale=0.6] at (12.6,4) {$a$-2};
  \node[above,scale=0.6] at (13.5,4) {$a$-1};
   \node[above,scale=0.6] at (14.4,4){$a$};
   \node[above,scale=0.8] at (14.9,3.95) {$\cdots$};
  \node[above,scale=0.6] at (15.6,4) {$2n$-2};
  \node[above,scale=0.6] at (16.5,4) {$2n$-1};
   \node at (11.9,-0.3) {.};
  \node at (12.1,-0.1) {.};
    \node at (12.3,0.1) {.};
     \node at (14.9,1.7) {.};
  \node at (15.1,1.9) {.};
    \node at (15.3,2.1) {.};
  \draw[black!20] (11,-2) grid (17,4);
 \Xmarking{12}{0}\Xmarking{13}{1}\Xmarking{14}{-1}\Xmarking{15}{2}\Xmarking{16}{3}\Xmarking{17}{4}
   \Cross{16}{4} \Cross{15}{3}
  \Cross{15}{4} \Cross{14}{4}\Cross{13}{4} \Cross{12}{4}
   \Cross{14}{3}\Cross{13}{3} \Cross{12}{3}\Cross{12}{1}
 \UP{14}{0} \UP{14}{1} \UP{14}{2} \UP{12}{2} \UP{13}{2}
 \EAST{13}{-1}\EAST{12}{-1}\EAST{13}{2}\EAST{12}{2}\EAST{14}{2}
\end{tikzpicture}
\hspace{20mm}
&\begin{tikzpicture}[thick]
\draw[dotted](1.6,1.6) circle [radius=1.6];
\draw(2.4,3) circle [radius=0.08];
\draw(3.02,2.35) circle [radius=0.08];
\draw(3.16,1.8) circle [radius=0.08];
\draw(3.1,1.1) circle [radius=0.08];
\draw(2.4,0.16) circle [radius=0.08];
\draw(1.6,0) circle [radius=0.08];
\draw(0.8,0.18) circle [radius=0.08];
\draw(1.6,3.2) circle [radius=0.08];
\draw(0.8,3.02) circle [radius=0.08];
\draw(0.03,1.4) circle [radius=0.08];
\draw(0.1,2.1) circle [radius=0.08];
\draw(0.3,2.5) circle [radius=0.08];
\draw[thin] (2.4,3) - - (0.8,0.18);
\draw[thick,red] (3.02,2.35) - - (0.3,2.5);
\draw[thin] (3.16,1.8) - - (0.03,1.4);
\draw[thin] (3.1,1.1) - - (0.1,2.1);
\draw[thin] (1.6,0) - - (1.6,3.2);
\draw[thin] (2.4,0.16) - - (0.8,3.02);
\node [right] at (3,2.35) {$0=v_{n-k+1}$};
\node [right] at (3.2,1.85) {$1=v_{n-k+2}$};
\node[right,scale=0.9] at (3.4,1.47) {$\vdots$};
\node [right] at (3.1,0.9) {$k$-1$=v_{n}$};
\node [right] at (2.5,0.05) {$k$$=v_{n+1}$};
\node[below,scale=0.9] at (2.4,-0.1) {$\dots$};
\node [below] at (1.65,-0.1) {$n$-2};
\node [below] at (1.65,-0.5) {\tiny$\parallel$};
\node [below] at (1.8,-0.9) {$v_{2n-k-1}$};
\node [below] at (0.6,0.1) {$n$-1};
\node [below] at (0.6,-0.3) {\tiny$\parallel$};
\node [below] at (0.5,-0.7) {$v_{2n-k}$};
\node [left] at (-0.1,1.35) {$v_{2n-k+1}=n$};
\node[left,scale=0.9] at (-0.2,1.8) {$\vdots$};
\node[left] at (0.1,2.1) {$v_{2n-1}=a$-2};
\node[left] at (0.3,2.65) {$v_0=a$-1};
\node[left] at (0.87,3.2) {$v_1=a$};
\node[above,scale=0.9] at (1,3.2) {$\dots$};
\node[above] at (1.7,3.2) {$2n$-2};
\node[above] at (1.7,3.5){\tiny$\parallel$};
\node[above] at (1.7,3.9){$v_{n-k-1}$};
\node [right] at (2.5,3) {$2n$-1$=v_{n-k}$};
\end{tikzpicture}
\end{align*}
 \caption{The configuration $G(p_{k}(id_{n}),  \emptyset)$ and the diagram $A(n-1,n-k)$, where $a=n+k$.}
\label{A(n-1,n-k-1)}
\end{figure}
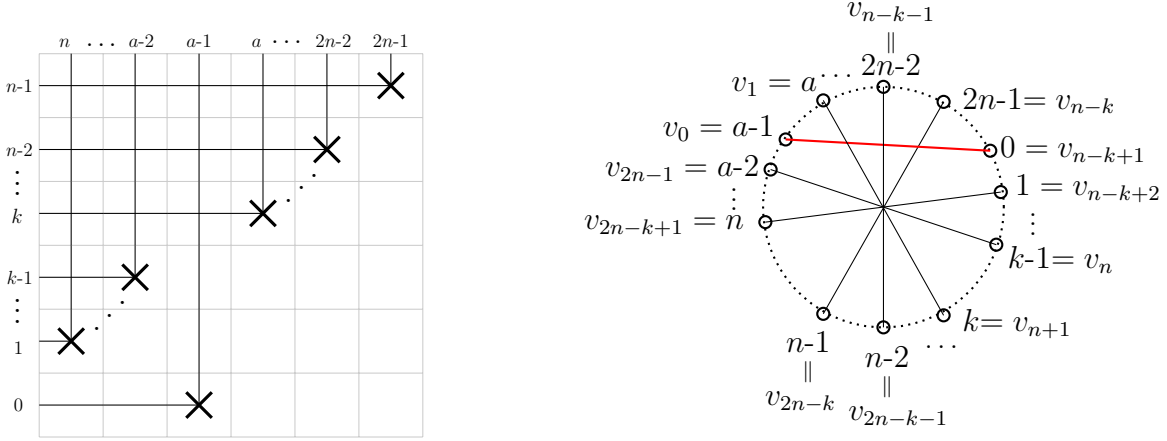

Now we are ready for the main result of this section. 

\begin{thm}\label{fb}
  For $n\geq 1$, we have
  \begin{align}\label{fbe}
  f(2n,2k-1)=b_{2n-2,2n-2k}^+,
  \end{align}
  and
  \begin{align}\label{fbo}
  f(2n-1,2k-1)=b_{2n-3,2n-2k-1}^+.
  \end{align}
  \end{thm}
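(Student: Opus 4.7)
We plan to prove Theorem~\ref{fb} by first establishing, via a bijective reduction, the identity
\[
f(n,k) = h(p_{k-1}(id_{n-1}))
\]
for all odd $k$ with $1 \le k \le n$ (with the convention $p_0(id_m) := id_m$), and then invoking Lemma~\ref{hlem}.

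The bijection is obtained by resolving the column-$1$ crossings first. By Proposition~\ref{web:uniq}, the order of resolution is immaterial, so starting from $G(id_n,\emptyset)$ we may first handle $(1,n), (1,n-1), \ldots, (1,2)$, each being maximal in turn. For the final web permutation to satisfy $\sigma_1 = k$, these operations are forced: smooth $(1,j)$ for $k+1 \le j \le n$ and then switch $(1,k)$ (the cases $k=1$ and $k=n$ being the obvious degenerations). The resulting intermediate configuration has permutation $id_n \cdot t_{1,k}$, elbows at $\{(1,j):j>k\}$, and the symbol at $(1,k)$. Now excise column $1$ and row $k$ and reindex the remaining rows and columns into an $(n-1)\times(n-1)$ grid. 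A direct inspection shows the reduced configuration equals $G(p_{k-1}(id_{n-1}), \emptyset)$: the symbol $(k,1)$ reindexes to $(k-1,1)$, while the symbols $(i,i)$ with $i \neq 1, k$ reindex to $(i-1,i)$ if $i<k$ or to $(i-1,i-1)$ if $i>k$, producing the permutation $p_{k-1}(id_{n-1}) = 2,3,\ldots,k-1,1,k,\ldots,n-1$. A routine case analysis of the defining inequalities for $\Cr(\cdot)$ matches the remaining unresolved crossings bijectively with $\Cr(p_{k-1}(id_{n-1}))$ under the reindexing, so the subsequent web expansions in the two grids are in bijection; this yields a bijection $\Phi$ between $\{\sigma \in \Web_n : \sigma_1 = k\}$ and the web permutations from $p_{k-1}(id_{n-1})$.

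It remains to verify that $M(\sigma) = M_0^{(n)}$ if and only if $M(\Phi(\sigma)) = M_0^{(n-1)}$. Tracing the two lines emanating from the symbol $(1,k)$ through the column-$1$ elbows shows that this symbol contributes exactly the arc $\{k-1,k\}$ to $M(\sigma)$, which lies in $M_0^{(n)}$ precisely when $k$ is odd. The excision of column $1$ and row $k$ is then the grid counterpart of contracting the short chord $\{v_{k-1},v_k\}$ in the chord diagram on the $2n$ cyclic labels, under which the remaining arcs of $M_0^{(n)}$ relabel exactly to $M_0^{(n-1)}$. The main obstacle is to make this correspondence rigorous at the level of arcs, since certain arcs of $M(\sigma)$ (for instance, the one containing the original top segment $n$) have their endpoints rerouted when the column-$1$ elbows are erased; this is most cleanly handled by working within the chord-diagram framework of Section~\ref{sec:webper}. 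Once the matching compatibility is established, Lemma~\ref{hlem} yields $f(n,k) = b_{n-2,\,n-k-1}^+$, and the substitutions $n \mapsto 2n,\; k \mapsto 2k-1$ and $n \mapsto 2n-1,\; k \mapsto 2k-1$ recover \eqref{fbe} and \eqref{fbo} respectively.
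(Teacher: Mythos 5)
Your proposal is correct and follows essentially the same route as the paper's proof: resolve the column-$1$ crossings first (forced smoothings above row $k$ followed by a switch at $(1,k)$), delete the resulting arc $\{k-1,k\}$ together with column $1$ and row $k$ to reduce to $G(p_{k-1}(id_{n-1}),\emptyset)$, and then invoke Lemma~\ref{hlem}; the paper carries this out for $f(2n,2k-1)$ and omits the odd case as similar, whereas you state the reduction $f(n,k)=h(p_{k-1}(id_{n-1}))$ uniformly. The arc-rerouting subtlety you flag is handled in the paper only pictorially (Fig.~\ref{f(2n,2k-1)}), so your level of rigor matches the original.
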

  
  \begin{proof}
  Recall that $f(2n,2k-1)$ is  the number of permutations $\sigma\in \Web_{2n}$ with $M(\sigma)=M_0^{(2n)}$ and $\sigma_1=2k-1$.
  We divide the process of calculating $f(2n,2k-1)$ into two steps. 
   The first step is to start from the configuration $G(id_{2n}, \emptyset)$, 
   resolve the crossings $(1, 2n), (1, 2n-1),\ldots, (1, 2k)$ in turn by smoothing and then resolve the crossing $(1, 2k-1)$ by switching.  
  After these operations, we can obtain the grid configuration  $G(\pi, E)$, where $\pi=id_{2n}\cdot t_{1,2k-1}$ and $E=\{(1,2n),(1,2n-1),...,(1,2k)\}$. And the corresponding permutation $\pi$ satisfies $\pi_1=2k-1$.
  Since $2k-2$ and $2k-1$ have been matched, we can remove this arc (together with the $\Small{\usym{2715}}$) to get a new grid configuration $G(p_{2k-2}(id_{2n-1}),  \emptyset)$; see Fig.~\ref{f(2n,2k-1)}. 
  
  The second step is to start from $G(p_{2k-2}(id_{2n-1}),  \emptyset)$, resolve each time a maximal crossing by smoothing or switching until there is no more crossing. 
  It then follows that $f(2n,2k-1)$ equals  the number of web permutations $\tau$ from $p_{2k-2}(id_{2n-1})$ that satisfy $M(\tau)=M_0^{(2n-1)}$. That is $f(2n,2k-1)=h(p_{2k-2}(id_{2n-1}))$, which proves~\eqref{fbe} by  Lemma~\ref{hlem}.
  
   The proof of Eq.~\eqref{fbo} is similar and will be omitted. This completes the proof of the theorem. 
  \end{proof}

\begin{figure}
  \centering
  \begin{tikzpicture}[scale=0.65]
  
   \node[left,scale=0.6] at (-0.15,-1.5) {0};
  \node[left,scale=0.6] at (-0.15,-0.6) {1};
  \node[left,scale=0.6] at (0,0.5) {$2k$-3};
   \node[left,scale=0.6] at (0,1.5) {$2k$-2};
   \node[left,scale=0.6] at (0,2.5) {$2k$-1};
  \node[left] at (-0.1,0.1) {$\vdots$};
  \node[left] at (-0.1,4.1){$\vdots$};
  \node[left,scale=0.6] at (0,3.4) {$2k$};
  \node[left,scale=0.6] at (0,4.6) {2$n$-1};
  \node[above,scale=0.6] at (0.5,5) {2$n$};
  \node[above,scale=0.6] at (1.4,5) {$2n$+1};
  \node[above,scale=0.8] at (2.2,4.9) {$\cdots$};
  \node[above,scale=0.6] at (2.6,5) {$a$};
  \node[above,scale=0.6] at (3.5,5) {$a$+1};
   \node[above,scale=0.6] at (4.3,5){$a$+2};
   \node[above,scale=0.7] at (5.05,5) {$\cdots$};
  \node[above,scale=0.6] at (5.65,5) {4$n$-2};
  \node[above,scale=0.6] at (6.5,5) {4$n$-1};
  
   \node at (1.9,-0.3) {.};
  \node at (2.1,-0.1) {.};
    \node at (2.3,0.1) {.};
     
      \node at (4.9,2.7) {.};
  \node at (5.1,2.9) {.};
    \node at (5.3,3.1) {.};
        \node at (8.5,1.5) {$\rightarrow$};
  
  \draw[black!20] (0,-2) grid (7,5);
  \Xmarking{1}{2}\Xmarking{2}{0}\Xmarking{3}{1}\Xmarking{4}{-1}\Xmarking{5}{3}\Xmarking{6}{4}\Xmarking{7}{5}
  
  \Cross{6}{5}\Cross{5}{5} \Cross{4}{5}\Cross{3}{5} \Cross{2}{5}
  \Cross{5}{4} \Cross{4}{4}\Cross{3}{4} \Cross{2}{4}
   \Cross{4}{3}\Cross{3}{3} \Cross{2}{3} \Cross{2}{1}
  
  \UP{4}{0} \UP{4}{1} \UP{4}{2} \UP{2}{2} \UP{3}{2}
 \EAST{3}{-1}\EAST{2}{-1}\EAST{1}{-1}\EAST{1}{0}\EAST{1}{1}
 \Asmooth{1}{3} \Asmooth{1}{4} \Asmooth{1}{5}

 \node[left,scale=0.6] at (10.85,-1.5) {0};
  \node[left,scale=0.6] at (10.85,-0.6) {1};
  \node[left,scale=0.6] at (11,0.5) {$2k$-3};
   \node[left,scale=0.6] at (11,1.5) {$2k$-2};
   \node[left,scale=0.6] at (11,2.5) {2$n$-3};
  \node[left] at (10.9,0.1) {$\vdots$};
  \node[left] at (10.9,2.1){$\vdots$};
  \node[left,scale=0.6] at (11,3.5) {$2n$-2};

  \node[above,scale=0.6] at (11.4,4) {$2n$-1};
  \node[above,scale=0.8] at (12.1,3.9) {$\cdots$};
  \node[above,scale=0.6] at (12.6,4) {$a$-2};
  \node[above,scale=0.6] at (13.5,4) {$a$-1};
   \node[above,scale=0.6] at (14.4,4){$a$};
   \node[above,scale=0.7] at (14.9,3.95) {$\cdots$};
  \node[above,scale=0.6] at (15.6,4) {$4n$-4};
  \node[above,scale=0.6] at (16.5,4) {$4n$-3};
  
   \node at (11.9,-0.3) {.};
  \node at (12.1,-0.1) {.};
    \node at (12.3,0.1) {.};
     
      \node at (14.9,1.7) {.};
  \node at (15.1,1.9) {.};
    \node at (15.3,2.1) {.};
  
  \draw[black!20] (11,-2) grid (17,4);
 \Xmarking{12}{0}\Xmarking{13}{1}\Xmarking{14}{-1}\Xmarking{15}{2}\Xmarking{16}{3}\Xmarking{17}{4}

 \Cross{16}{4} \Cross{15}{3}
  \Cross{15}{4} \Cross{14}{4}\Cross{13}{4} \Cross{12}{4}
   \Cross{14}{3}\Cross{13}{3} \Cross{12}{3}

   \Cross{12}{1}
  \UP{14}{0} \UP{14}{1} \UP{14}{2} \UP{12}{2} \UP{13}{2}
 \EAST{13}{-1}\EAST{12}{-1}\EAST{13}{2}\EAST{12}{2}\EAST{14}{2}

  \end{tikzpicture}
  \caption{ From the configuration $G(id_{2n}\cdot t_{1,2k-1}, \{(1,2n),\ldots,(1,2k)\})$ to $G(p_{2k-2}(id_{2n-1}),  \emptyset)$, where $a=n+k$.    \label{f(2n,2k-1)}}

\end{figure}
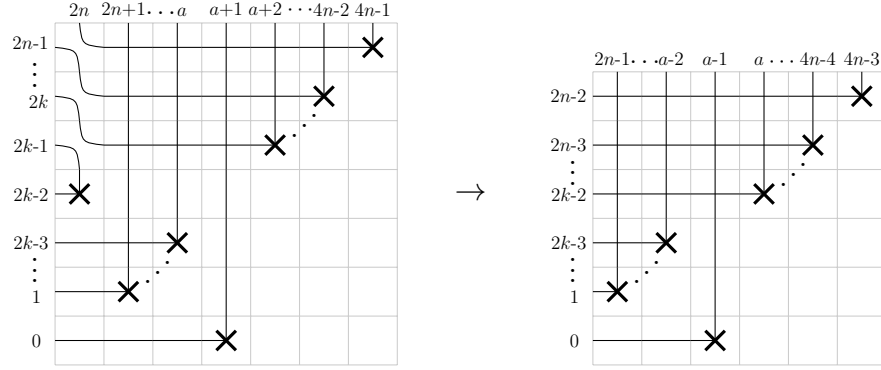

Combining Theorems~\ref{fb} and~\ref{bs} leads to a proof of Conjecture~\ref{conj:hwang}.

\section{A group action proof of~\eqref{divi:gam:at}}
\label{Sec:groact}

In this section, we construct a group action on permutations without double descents to prove combinatorially 
\begin{equation}\label{ga:XZ}
\sum_{\sigma\in\widetilde\S_n\atop{\des(\sigma)=i}}t^{\rmida(\sigma)}\alpha^{\lmi(\sigma)+\rmi(\sigma)-2}=2^i\sum_{\sigma\in\Web_{n-1}\atop{\drop(\sigma)=i}}t^{\fix(\sigma)}\alpha^{\cyc(\sigma)},
\end{equation}
thereby answering Problem~\ref{pro1:xz}.

For any $x\in[n]$ and $\sigma\in\S_n$,
the {\em$x$-factorization} of $\sigma$ is the partition of $\sigma$ into the form $\sigma=w_1w_2 xw_3w_4$,
where  $w_2$ (resp.,~$w_3$) is the maximal contiguous interval
(possibly empty) immediately to the left (resp.,~right) of $x$ whose letters are all greater than $x$.
Following Foata and Strehl~\cite{FS74} the action $\varphi_x$ is defined by 
$$
\varphi_x(\sigma)=w_1w_3xw_2w_4.
$$
For instance, with $\sigma=3\,4\,\blue{\bf 8}\,5\,\magenta{\bf 7\,10}\,1\,6\,2\,9\in\S_{10}$, in the $5$-factorization of $\sigma$,  $w_2=8$ and $w_3=7\,10$ and so $\varphi_5(\sigma)=3\,4\,\magenta{\bf 7\,10}\,5\,\blue{\bf8}\,1\,6\,2\,9$. One is also invited to check that $\varphi_4(\sigma)=3\,\magenta{\bf 8\,5\, 7\,10}\, 4\,1\,6\,2\,9$. It is plain to see that $\varphi_x$ is an involution acting on $\S_n$, and for all $x,y\in[n]$, $\varphi_x$ and $\varphi_y$ commute. Therefore, for any $S\subseteq[n]$ we can define the function $\varphi_S:\S_n\rightarrow\S_n$ by $\varphi_S=\prod_{x\in S}\varphi_x$, where multiplication is the composition of functions. Hence the group $\Z_2^n$ acts on $\S_n$ via the function $\varphi_S$. This action is called the {\em Foata--Strehl action} (FS-action for short) on permutations. Note that FS-action can be extended to permutations of a finite  set of distinct positive integers. 

We need the following characterization of Andr\'e permutations using $x$-factorization observed in~\cite{HR98}.

\begin{lem}\label{xfac:andr}
A permutation $\sigma\in\S_n$ is an Andr\'e permutation iff in the $x$-factorization $\sigma=w_1w_2xw_3w_4$ for each $x\in[n]$, $\max(w_2x)\leq\max(xw_3)$. 
\end{lem}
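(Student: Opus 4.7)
The plan is to prove both directions by induction on $n$, using the minimum letter of $\sigma$ as the distinguished pivot. The key structural observation is this: if $m=\min(\sigma)$ appears at position $k$, so that $\sigma = u\cdot m\cdot v$ with $u=\sigma_1\cdots\sigma_{k-1}$ and $v=\sigma_{k+1}\cdots\sigma_n$, then for $x=m$ the $x$-factorization degenerates to $w_1=w_4=\emptyset$, $w_2=u$, $w_3=v$ (since every other letter exceeds $m$), whereas for any $x\neq m$ the blocks $w_2$ and $w_3$ — consisting of letters strictly greater than $x>m$ — cannot straddle $m$ and therefore sit entirely inside either $u$ or $v$.

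For the forward implication, assume $\sigma$ is Andr\'e. Then by definition $u$ and $v$ are Andr\'e and $\max(\sigma)=\max(m\cdot v)$. Fix $x\in[n]$. If $x=m$, the condition $\max(w_2 x)\le\max(xw_3)$ reads $\max(u\cdot m)\le\max(m\cdot v)$, which is exactly the recorded Andr\'e equality (after subtracting $m$ from both sides in the sense of noting $\max(\sigma)=\max(mv)\ge\max(u)$). If $x\neq m$, the $x$-factorization of $\sigma$ coincides, on its $w_2 x w_3$ part, with the $x$-factorization of whichever of $u$ or $v$ contains $x$, so the inequality follows from the inductive hypothesis applied to that shorter Andr\'e subword.

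For the reverse implication, assume every $x\in[n]$ satisfies $\max(w_2 x)\le\max(xw_3)$. Specializing to $x=m$ immediately recovers $\max(\sigma)=\max(m\cdot v)$, giving the outer Andr\'e requirement. To finish, I need the condition to be inherited by $u$ and $v$: for any $x\in u$, the blocks $w_2,w_3$ attached to $x$ in $\sigma$ lie entirely in $u$ (else they would contain $m<x$, contradicting their defining property), and so they are exactly the blocks of the $x$-factorization of $x$ inside $u$; the inequality transfers verbatim. Symmetrically for $v$. The inductive hypothesis then makes $u$ and $v$ Andr\'e, and combined with $\max(\sigma)=\max(mv)$ this shows $\sigma$ is Andr\'e.

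I do not expect any real obstacle: the argument is essentially a bookkeeping exercise around the pivot $m$. The only subtlety is the edge cases where $u$, $v$, $w_2$, or $w_3$ is empty, which are handled by the conventions $\max(\emptyset w)=\max(w)$ and $\max(w_2 x)=x$ when $w_2=\emptyset$; in each case the inequality either becomes trivial or reduces directly to the Andr\'e condition for the non-empty side.
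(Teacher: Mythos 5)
Your proof is correct. Note that the paper itself gives no argument for this lemma --- it is quoted as an observation from Hetyei and Reiner \cite{HR98} --- so there is nothing to compare against; your induction on the position of the minimum, with the key observation that for $x\neq m$ the blocks $w_2,w_3$ cannot cross the minimum $m$ and hence the $x$-factorization restricts verbatim to $u$ or $v$, is exactly the natural argument matching the recursive definition of Andr\'e permutations, and the edge cases (empty $u$, $v$, $w_2$, $w_3$) are handled as you describe.
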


We also need the so-called {\em bi-basic decomposition }of permutations introduced in~\cite{DLP}. 
For $\sigma\in\S_n$ with $\lmin(\sigma)-1=k$ and $\rmin(\sigma)-1=l$, there exists a unique decomposition of $\sigma$ as
\begin{equation}\label{bi:basic}
\sigma=\blue{\a_1\a_2\cdots\a_k}1\magenta{\b_1\b_2\cdots\b_l},
\end{equation}
where 
\begin{itemize}
\item the first letter of each $\a_i$ ($1\leq i\leq k$), which is smallest  inside $\a_i$, is a left-to-right minimum of $\sigma$; 
\item  and the last letter of each $\b_i$ ($1\leq i\leq l$), which is smallest  inside $\b_i$, is a right-to-left minimum of $\sigma$. 
\end{itemize}
This is the {\em bi-basic decomposition} of $\sigma$ and each $\a_i$ or $\b_i$ is called a block of $\sigma$. An example of the bi-basic decomposition  is 
\begin{equation*}\label{bi-basic}
\sigma=\blue{{\bf3}\,4\, 8\,5\,7\,10}|1|\magenta{6\,{\bf2}}|\magenta{\bf9}.
\end{equation*}

For a word $w=w_1w_2\cdots w_n$, let $w^r:=w_nw_{n-1}\cdots w_1$ be the reversal of $w$. 
Now we can define a $\Z_2^n$-group action on $\widetilde\S_n$ to prove~\eqref{ga:XZ}. For a given permutation $\sigma\in\widetilde\S_n$ with bi-basic decomposition as in~\eqref{bi:basic} and $x\in[n]$, we define $\psi_x(\sigma)$ according to the following cases:
\begin{enumerate}
\item If $x$ is the first letter of  $\a_i$ for some $i$, then $x$ is a valley of $\sigma$. Let 
$$\tilde\a_i:=\varphi_S(\a_i),$$ 
where $S$ is the set of all double ascents of $\sigma$ inside $\a_i$.   Define $\psi_x(\sigma)$  to be the permutation  obtained from $\sigma$ by deleting the block $\a_i$ and then inserting $(\tilde\a_i)^r$ in some proper gap between blocks so that $x$ becomes an additional right-to-left minimum;  
\item If $x$ is the last letter of  $\b_i$ for some $i$, where $\b_i$ has {\bf length at least two}\footnote{In case~(1), each block $\a_i$ also has length at least two, as $\sigma$ has no double descents.},    then $x$ is also a valley of $\sigma$. Let 
$$\tilde\b_i:=\varphi_S(\b_i),$$ 
where $S$ is the set of all double ascents of $\sigma$ inside $\b_i$.
Define $\psi_x(\sigma)$ to be the permutation obtained from $\sigma$ by deleting the block $\b_i$ and then inserting $(\tilde\b_i)^r$ in some proper gap between blocks so that $x$ becomes an additional left-to-right minimum.
\item If $x\neq1$ is a valley of $\sigma$ not in the above two cases, then set  $\psi_x(\sigma)=\varphi_x(\sigma)$. 
\item Otherwise, set $\psi_x(\sigma)=\sigma$. 
\end{enumerate}
It is plain to see that $\psi_x$ is an involution acting on $\widetilde\S_n$, and for all $x,y\in[n]$, $\psi_x$ and $\psi_y$ commute. Therefore, for any $S\subseteq[n]$ we can define the function $\psi_S:\widetilde\S_n\rightarrow\widetilde\S_n$ by $\psi_S=\prod_{x\in S}\psi_x$. Hence the group $\Z_2^n$ acts on $\widetilde\S_n$ via the function $\psi_S$. We called this action the {\em block-Foata--Strehl action} (BFS-action for short). 

Let $V(\sigma)$ be the set of valleys other than $1$ in $\sigma\in\widetilde\S_n$. It is clear that $\psi_x(\sigma)=\sigma$ iff $x\notin V(\sigma)$. 

\begin{exam}
If $\sigma=\blue{{\bf3}\,4\, 8\,5\,7\,10}|1|\magenta{6\,{\bf2}}|\magenta{\bf9}\in\widetilde\S_{10}$, then $V(\sigma)=\{2,3,5\}$.  We compute that 
\begin{align*}
\psi_2(\sigma)&=3\,4\, 8\,5\, 7\,10|\cyan{\bf2\,6}|1|9,\\
\psi_3(\sigma)&=1|6\,2|\cyan{\bf4\,7\,10\,5\,8\,3}|9,\\
\psi_5(\sigma)&=3\,4\,\cyan{\bf7\,10\,5\,8}|1|6\,2|9,\\
\psi_{\{2,3\}}(\sigma)&=\cyan{\bf2\,6}|1|\cyan{\bf4\,7\,10\,5\,8\,3}|9,\\
\psi_{\{2,5\}}(\sigma)&=3\,4\,\cyan{\bf7\,10\,5\,8}|\cyan{\bf2\,6}|1|9,\\
\psi_{\{3,5\}}(\sigma)&=1|6\,2|\cyan{\bf4\,8\,5\,7\,10\,3}|9,\\
\psi_{V}(\sigma)&=\cyan{\bf2\,6}|1|\cyan{\bf4\,8\,5\,7\,10\,3}|9.
\end{align*}
\end{exam}

For $\sigma\in\S_n$,  let $\lrmi(\sigma):=\lmi(\sigma)+\rmi(\sigma)-2$. The main feature of the BFS-action lies in the following lemma, whose proof is straightforward and will be omitted.

\begin{lem}\label{lem:BFS}
 For any $\sigma\in\widetilde\S_n$ and $x\in[n]$, we have 
 $$
 (\lrmi,\rmida,\des)\,\sigma=(\lrmi,\rmida,\des)\,\psi_x(\sigma).
 $$
\end{lem}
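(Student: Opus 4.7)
The proof is a case analysis matching the four cases in the definition of $\psi_x$. Case~(4) is immediate since $\psi_x(\sigma)=\sigma$, and case~(3), where $\psi_x=\varphi_x$ and $x$ is an interior valley, is the classical Foata--Strehl calculation: both $w_2$ and $w_3$ are nonempty in the $x$-factorization, so the swap $w_1w_2xw_3w_4\mapsto w_1w_3xw_2w_4$ merely rearranges two subwords of letters all exceeding $x$, and one checks directly that $\lmi$, $\rmi$, $\rmida$, and the descent contribution at the four boundary edges are all preserved. The content therefore lies in cases~(1) and~(2), which are symmetric, so I focus on case~(1).

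Two structural observations in $\widetilde\S_n$ simplify matters. Every $\alpha$-block has length at least $2$ (a singleton would create a double descent), and a right-to-left minimum is a double ascent iff it forms a singleton $\beta$-block (since the last letter of a $\beta$-block of length $\ge 2$ is a valley, and $1$ is a valley). Hence $\rmida(\sigma)$ counts the singleton $\beta$-blocks of $\sigma$. The insertion position of $(\tilde\alpha_i)^r$ in case~(1) is uniquely determined by requiring $x$ to become a right-to-left minimum: place $(\tilde\alpha_i)^r$ between the two consecutive terms $b_j<x<b_{j+1}$ of the increasing sequence of right-to-left minima (with $b_0=1$, $b_{l+1}=+\infty$). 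The inserted block has length $\ge 2$ and no $\beta$-block is removed, so $\rmida$ is preserved. For $\lrmi$: $x$ ceases to be the unique left-to-right minimum of $\alpha_i$ while no other left-to-right minimum is affected, and $x$ becomes the unique new right-to-left minimum, since every other letter of $(\tilde\alpha_i)^r$ exceeds $x$ (and so cannot be a right-to-left minimum with $x$ to its right), and no letter in the $\beta$-part becomes a new left-to-right minimum as $1$ sits to its left. Thus $\lmi$ drops by $1$ and $\rmi$ rises by $1$, and $\lrmi$ is preserved.

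For $\des$ in case~(1), I would form the sign sequence $s_0s_1\cdots s_\ell$ of $\alpha_i$ inside $\sigma$ (with $\ell=|\alpha_i|$), where $s_0=\,>$ because the letter preceding $\alpha_i$ exceeds $x$ and $s_\ell=\,>$ because the letter following $\alpha_i$ is less than $x$. The no-double-descent hypothesis forbids the $>>$ pattern, so every maximal $>$-run has length exactly $1$; writing $m+1$ for their number, $\alpha_i$ has $m$ peaks, $m$ valleys, $\ell-2m$ double ascents (these form $S$), and internal descent count $d_{\alpha_i}=m-1$. A direct commutation check shows that each $\varphi_y$ for $y\in S$ shifts the descent count of the word by $+1$ and preserves the double-ascent status of the remaining elements of $S$, so $d_{\tilde\alpha_i}=d_{\alpha_i}+|S|=\ell-m-1$ and $d_{(\tilde\alpha_i)^r}=\ell-1-d_{\tilde\alpha_i}=m$. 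A direct case check (handling $i\in\{1,k\}$ separately) then shows that deleting $\alpha_i$ decreases the total descent count by $d_{\alpha_i}+1$, while inserting $(\tilde\alpha_i)^r$ into the $\beta$-part (both flanking edges are ascents since $b_j<x<$ first of $(\tilde\alpha_i)^r$ and $x<$ first of $\beta_{j+1}$) increases it by $d_{(\tilde\alpha_i)^r}=m$. Since $m=d_{\alpha_i}+1$, these cancel and $\des$ is preserved. Case~(2) follows by the same bookkeeping with the roles of left-to-right and right-to-left minima interchanged.

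The main obstacle is the edge book-keeping across all the degenerate configurations (when the block to be removed or inserted sits at a boundary, so that the ``adjacent'' neighbor is really $\sigma_0=\sigma_{n+1}=+\infty$ or the distinguished letter $1$), where the identity must hold uniformly. The commutation claim that $\varphi_S$ shifts the descent count of the word $\alpha_i$ by exactly $|S|$ is short but nonobvious, since the $x$-factorization of the word $\alpha_i$ treats its first and last letters asymmetrically relative to the $\sigma$-padding; one must check that the action on such boundary elements still contributes the expected $+1$ at each step.
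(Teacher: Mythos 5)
Your proof is correct. Note that the paper itself gives no argument here: it states that the proof of Lemma~\ref{lem:BFS} ``is straightforward and will be omitted,'' so there is nothing to compare against; what you have written is a full verification of the omitted claim, and the case analysis you follow (mirroring the four cases defining $\psi_x$) is the natural one. Your key computation is the right one: in case~(1) the sign sequence of $\a_i$ has boundary signs $>\,\cdots\,>$ with no $>>$ internally, giving $d_{\a_i}=m-1$, $|S|=\ell-2m$, hence $d_{\tilde\a_i}=\ell-m-1$ and $d_{(\tilde\a_i)^r}=m=d_{\a_i}+1$, which exactly cancels the $d_{\a_i}+1$ descents lost upon deletion (the two junction descents flanking $\a_i$ collapse to one, uniformly in the edge cases $i=1$ and $i=k$) against the $m$ descents gained upon insertion between two ascent edges. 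You also correctly isolate the genuinely delicate point, namely that $\varphi_S$ applied to the \emph{word} $\a_i$ raises its internal descent count by exactly $|S|$: this holds because every $y\in S$ is strictly internal to $\a_i$ (the first letter of $\a_i$ is a valley of $\sigma$ and the last a peak), so the only letters whose peak/valley/da/dd type can be disturbed by some $\varphi_y$ are the endpoints of the word, which never lie in $S$. Two small items you compress but which do need a word in a full writeup: (i) preservation of $\rmida$ also requires checking that the changed adjacencies at the insertion point do not alter the double-ascent status of the neighbouring singleton $\b$-blocks (it does not, since the inserted letters all exceed $x$, hence exceed every right-to-left minimum to their left); and (ii) case~(2) is a mirror image rather than literally identical bookkeeping --- there the boundary signs of $\b_i$ are both $<$, so deletion costs $m'$ descents and insertion between two \emph{descent} edges regains $m'$ --- but the cancellation goes through in the same way.
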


We can now prove~\eqref{ga:XZ} combinatorially using the BFS-action.  

\begin{proof}[{\bf Combinatorial proof of~\eqref{ga:XZ}}]
For $\sigma\in\widetilde\S_n$, let $\Orb(\sigma)$ be the orbit of $\sigma$ under the BFS-action. As $\psi_S(\sigma)=\sigma$ iff $S\cap V(\sigma)=\emptyset$, we have $|\Orb(\sigma)|=2^{|V(\sigma)|}$. Assuming that $\lrmi(\sigma)=l$, then by Lemma~\ref{xfac:andr}, there exists a unique $\hat\sigma\in\Orb(\sigma)$ with bi-basic decomposition 
\begin{equation}\label{bi-rep}
\hat\sigma=1\magenta{\b_1\b_2\cdots\b_l}
\end{equation}
 satisfying  that 
 \begin{center}
($\bigstar$)\quad each block $\b_i$ when removing its last letter becomes  an Andr\'e permutation. 
 \end{center}
Thus, by Lemma~\ref{lem:BFS}, we have 
\begin{align*}
\sum_{\pi\in\Orb(\sigma)}t^{\rmida(\pi)}\alpha^{\lrmi(\pi)}&=2^{|V(\hat\sigma)|}t^{\rmida(\hat\sigma)}\alpha^{\lrmi(\hat\sigma)}\\
&=2^{\des(\hat\sigma)}t^{\rmida(\hat\sigma)}\alpha^{\rmi(\hat\sigma)-1}, 
\end{align*}
as $|V(\hat\sigma)|=\des(\hat\sigma)$ and $\lrmi(\hat\sigma)=\rmi(\hat\sigma)-1$. Summing over all orbits of $\widetilde\S_{n,i}:=\{\sigma\in\widetilde\S_n:\des(\sigma)=i\}$ under the BFS-action gives 
\begin{equation}\label{eq:BFS}
\sum_{\sigma\in\widetilde\S_{n,i}}t^{\rmida(\sigma)}\alpha^{\lrmi(\sigma)}=2^i\sum_{\hat\sigma\in\widehat\S_{n,i}}t^{\rmida(\hat\sigma)}\alpha^{\rmi(\hat\sigma)-1},
\end{equation}
where  
$$
\widehat\S_{n,i}:=\{\hat\sigma\in\widetilde\S_{n,i}:  \text{ bi-basic decomposition of $\hat\sigma$ has the form~\eqref{bi-rep} satisfying ($\bigstar$)}\}.
$$
For a word $w=w_1w_2\cdots w_k$, let $c(w)$ be the cyclic permutation $(w_1-1,w_2-1,\ldots,w_k-1)$. There is an obvious one-to-one correspondence 
$$
\hat\sigma=1\magenta{\b_1\b_2\cdots\b_l}\mapsto c(\b_1) c(\b_2)\cdots c(\b_l)=\tau
$$
  between $\widehat\S_{n,i}$ and $\{\tau\in\Web_{n-1}: \drop(\tau)=i\}$ satisfying 
  $$
  \rmida(\hat{\sigma})=\fix(\tau)\quad\text{and}\quad\rmi(\hat{\sigma})-1=\cyc(\tau). 
  $$
  The desired relationship~\eqref{ga:XZ} then follows from this bijection and~\eqref{eq:BFS}.
\end{proof}


\section{The bijection $\Lambda$ and proof of Theorem~\ref{thm:pro2xz}}
\label{Sec:pro2}
In this section, we construct the bijection $\Lambda:\Web_n\rightarrow\Delta_n$ using the the min-max trees introduced by   Hetyei and Reiner~\cite{HR98} (see also~\cite[Section~1.6.3]{St}) as an intermediate structure. 

The {\em min-max tree} $T(w)$ associated with a sequence $w=w_1w_2\cdots w_n$ of distinct integers is defined recursively as follows. Suppose that  $j$ is the least integer for which either  
$w_j=\min\{w_1,\ldots,w_n\}$ or $w_j=\max\{w_1,\ldots, w_n\}$. Then 
 define $T(w)$ to be the binary tree rooted at $w_j$ with  $T(w_1\cdots w_{j-1})$  and $T(w_{j+1}\cdots w_n)$ as left and right branches, respectively. Note that no vertex of a min-max tree $T(w)$ has only a left successor. See Fig.~\ref{MNT} (left) for the  min-max tree $T(562314)$. 
 Conversely, the left-first inorder reading of the nodes of  $T(w)$ yields the sequence $w$.
 
An interior node in $T(w)$ is called a {\em min} (resp.,~{\em max}) node if it has the minimum (resp.,~maximum) label among all its descendants.
Let $T_{w_i}$ be the subtree of $T(w)$ consisting of $w_i$ and its right branch. 
 For $1\leq i\leq n$, the {\em Hetyei--Reiner action} (HR-action for short) $\phi_i$ is an operation  on the tree $T(w)$  defined as:  
if $w_i$ is a min (resp., max) node, then replace $w_i$ with the largest (resp.,~smallest) element of $T_{w_i}$ and permute the remaining elements of $T_{w_i}$ so that they keep their same relative order. 
See Fig.~\ref{MNT} for an HR-action $\phi_2$ on $T(562314)$. 

\begin{figure}
\begin{tikzpicture}
\draw (0,0)--(-1,-1);
\draw (0,0)--(1,-1)--(2,-2);
\draw (1,-1)--(0,-2)--(1,-3);
\fill (0,0) circle (2pt);
\node[above] at (0,0) {$6$};
\node[left] at (-1,-1) {$5$};
\node[above] at (1,-1) {$1$};
\node[left] at (0,-2) {$2$};
\node[below] at (1,-3) {$3$};
\node[below] at (2,-2) {$4$};
\fill (-1,-1) circle (2pt);
\fill (1,-1) circle (2pt);
\fill (2,-2) circle (2pt);
\fill (0,-2) circle (2pt);
\fill (1,-3) circle (2pt);
\node[below] at (1,-3.5) {$T(562314)$};
\draw[][->,very thick](3.5,-1.5)--(4.5,-1.5) node[midway, above]{$\phi_2$};
\draw (7,0)--(6,-1);
\draw (7,0)--(8,-1)--(9,-2);
\draw (8,-1)--(7,-2)--(8,-3);
\fill (7,0) circle (2pt);
\node[above] at (7,0) {$1$};
\node[left] at (6,-1) {$5$};
\node[above] at (8,-1) {$2$};
\node[left] at (7,-2) {$3$};
\node[below] at (8,-3) {$4$};
\node[below] at (9,-2) {$6$};
\fill (6,-1) circle (2pt);
\fill (8,-1) circle (2pt);
\fill (9,-2) circle (2pt);
\fill (7,-2) circle (2pt);
\fill (8,-3) circle (2pt);
\node[below] at (8,-3.5) {$T(513426)$};
\end{tikzpicture}
\caption{ The HR-action  $\phi_2$ on the tree $T(562314)$.\label{MNT}}
\end{figure}
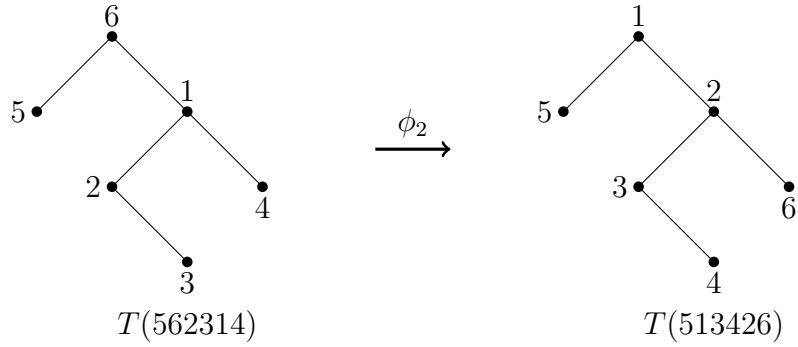

Let $T(\mathfrak{S}_n):=\{T(\sigma):\sigma\in\S_n\}$. Hetyei and Reiner~\cite{HR98}
proved that 
the action $\phi_i$ is an involution acting on $T(\mathfrak{S}_n)$ and 
$\phi_i$ and $\phi_j$ commute for all $i,j\in [n]$.
Hence, for any subset $S\subseteq [n]$ we may define the function $\phi_S:\,T(\mathfrak{S}_n)\longrightarrow T(\mathfrak{S}_n)$ by 
$$
\phi_S(T(\sigma))=\prod_{i\in S}\phi_i(T(\sigma)).
$$
The group $\mathbb{Z}_2^n$ acts on $T(\mathfrak{S}_n)$ via the functions $\psi_S$, $S\subseteq[n]$.  This action is called the HR-action on permutations. The following characterization of Andr\'e permutations is due to Hetyei and Reiner~\cite{HR98}. 

\begin{lem}[Hetyei and Reiner]\label{HR}
A permutation $\sigma\in\S_n$ is an Andr\'e permutation  if and only if all interior nodes of $T(\sigma)$ are min nodes, i.e., $T(\sigma)$ is an increasing binary tree.
\end{lem}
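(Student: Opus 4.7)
The plan is to prove both implications by induction on $n=|\sigma|$, with the base cases $n \le 1$ immediate since any such $\sigma$ is vacuously Andr\'e and $T(\sigma)$ has no interior node to check. The engine of the argument is that the recursive definition of an Andr\'e permutation and the recursive construction of $T(\sigma)$ are structurally parallel, so the whole lemma reduces to identifying the ``pivot'' position in each recursion.

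For the forward direction, I would start with an Andr\'e permutation $\sigma = w_1 \cdots w_n$ whose smallest letter is $w_k$ and show that $k$ is precisely the least index $j$ for which $w_j \in \{\min\sigma, \max\sigma\}$. The index $k$ itself hosts the minimum, and no $j<k$ can by uniqueness; meanwhile the Andr\'e identity $\max\{w_1,\ldots,w_n\} = \max\{w_k,\ldots,w_n\}$ forces the maximum to occur at some position $\geq k$. Hence $w_k$ is the root of $T(\sigma)$, the left and right subtrees are $T(w_1 \cdots w_{k-1})$ and $T(w_{k+1} \cdots w_n)$, and the inductive hypothesis applied to the two Andr\'e subwords shows both subtrees are increasing. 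Since $w_k$ is globally minimal, the root is also a min node, so every interior node of $T(\sigma)$ is a min node. Conversely, if every interior node of $T(\sigma)$ is a min node then the root $w_j$ equals $\min\sigma$; by the construction of $T(\sigma)$, $j$ is the least index hosting the minimum or maximum of $\sigma$, so the global maximum cannot appear among $w_1,\ldots,w_{j-1}$, giving $\max\{w_1,\ldots,w_n\}=\max\{w_j,\ldots,w_n\}$. The inductive hypothesis on the left and right subtrees then promotes the subwords $w_1 \cdots w_{j-1}$ and $w_{j+1} \cdots w_n$ to Andr\'e permutations, so $\sigma$ itself is Andr\'e.

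The only delicate step is confirming that the two distinguished indices coincide, namely that the position of $\min\sigma$ equals the root index of $T(\sigma)$, and this is pinned down by the Andr\'e max-identity in both directions. Once this alignment is noted, the induction is completely routine, and I do not anticipate any genuine obstacle.
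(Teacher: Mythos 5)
Your proof is correct: the alignment of the Andr\'e pivot (the position of the minimum) with the root index of the min-max tree is exactly the right observation, and both directions of the induction go through as you describe, including the check that for $n\ge 2$ the root is an interior node and hence must be a min node in the converse. The paper itself offers no proof to compare against --- it states this as a known result of Hetyei and Reiner and cites their article --- so your argument serves as a correct self-contained justification.
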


Let $\mathcal{A}_n$ be the set of all Andr\'e permutations in $\S_n$.   As no vertex of a min-max tree $T(\sigma)$ has only a left successor,   the following fact then follows from  Lemma~\ref{HR}.
\begin{Fact}\label{Andre}
The number of descents  of $\sigma\in\mathcal{A}_n$ equals the number of interior nodes with two children of $T(\sigma)$.
\end{Fact}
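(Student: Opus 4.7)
The plan is to give a direct bijective argument that rests on two structural features of $T(\sigma)$: Lemma~\ref{HR} ensures that $T(\sigma)$ is an increasing binary tree (every node is strictly less than all its descendants), and the general definition of the min-max tree forbids any node from having only a left child. Consequently an interior node of $T(\sigma)$ either has two children or has only a right child, and a node is a leaf precisely when it has no right child.

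The core observation concerns the left-first inorder traversal, which recovers $\sigma$. For a node $w$ at inorder position $i$ whose inorder successor is $w'$ at position $i+1$, one of two mutually exclusive situations occurs. If $w$ has a right child, then $w'$ is the leftmost node of the right subtree of $w$, hence a descendant of $w$, hence larger than $w$ by the increasing property; so position $i$ is an ascent. If $w$ has no right child, then $w$ is a leaf, and $w'$ is the lowest proper ancestor of $w$ whose left subtree contains $w$; again by the increasing property, $w' < w$, so position $i$ is a descent. Thus descents of $\sigma$ biject with leaves of $T(\sigma)$ other than the (unique) leaf that is last in inorder.

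To convert the count of leaves into the count of interior nodes with two children, I would invoke the standard handshake-style identity for binary trees. Writing $n_0,n_1,n_2$ for the number of nodes with $0,1,2$ children, every non-root node is a child of exactly one other node, which yields $n_1+2n_2=n_0+n_1+n_2-1$, i.e.\ $n_0=n_2+1$. Hence
\[
\des(\sigma)\;=\;n_0-1\;=\;n_2,
\]
which is the desired equality.

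The argument admits no real obstacle; the only point that must be handled with care is the identification of leaves with right-child-free nodes, which is exactly where the min-max constraint ``no only-left-child'' enters. An equivalent and slightly more concrete formulation is the explicit bijection sending each interior node $v$ with two children to its inorder predecessor $u$, namely the rightmost node of the left subtree of $v$; the increasing property forces $u>v$, so $u$ marks a descent, and the inverse assigns to a descent at position $i$ the node $\sigma_{i+1}$, which must be an ancestor of $\sigma_i$ with a left subtree and therefore must have two children.
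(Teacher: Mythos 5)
Your argument is correct and is exactly the elaboration of what the paper leaves implicit: the paper simply asserts that the Fact follows from Lemma~\ref{HR} together with the observation that no node of a min-max tree has only a left child, and your inorder-traversal analysis (descents correspond to non-final leaves, plus the identity $n_0=n_2+1$) is the natural way to fill in that one-line justification. No gaps.
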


Let $\mathrm{Alt}_n$ be the set of up-down permutations in $\S_n$. 
Recall that $\Web_n$ (resp.,~$\Delta_n$) is the set of permutations of $[n]$ whose product consists  of  only Andr\'e (resp.,~up-down) cycles.
Thus, once we can establish a first-letter-preserving bijection from $\mathcal{A}_n$ to $\mathrm{Alt}_n$, applying this   bijection to each cycle of a web permutation immediately yields a bijection from $\Web_n$ to $\Delta_n$.
\vskip 3mm
\textbf{The bijection $\Lambda$ from $\mathcal{A}_n$ to $\mathrm{Alt}_n$.}
If $\sigma\in\mathcal{A}_n$, with its min-max tree $T(\sigma)$, we label the nodes  of $T(\sigma)$ alternately with the words ``min" and ``max", according to the order they appear in $\sigma$; see Fig.~\ref{Lambd} (left). Let $S$ be the set of all indices of interior nodes that receive the word ``max". For the example in Fig.~\ref{Lambd} (left), $S=\{2,4,6\}$. Define $\Lambda(\sigma)$ to be the permutation whose min-max tree is 
$\phi_S(T(\sigma))$.
Fig.~\ref{Lambd} illustrates an example of the mapping $\Lambda$.
It is clear from the construction that $\Lambda(\sigma)\in\mathrm{Alt}_n$ and that this mapping is reversible. 
Moreover, the bijection $\Lambda$ also preserves the first letter of permutations. Thus, $\Lambda$ can be extended to a bijection between $\Web_n$ and $\Delta_n$. 
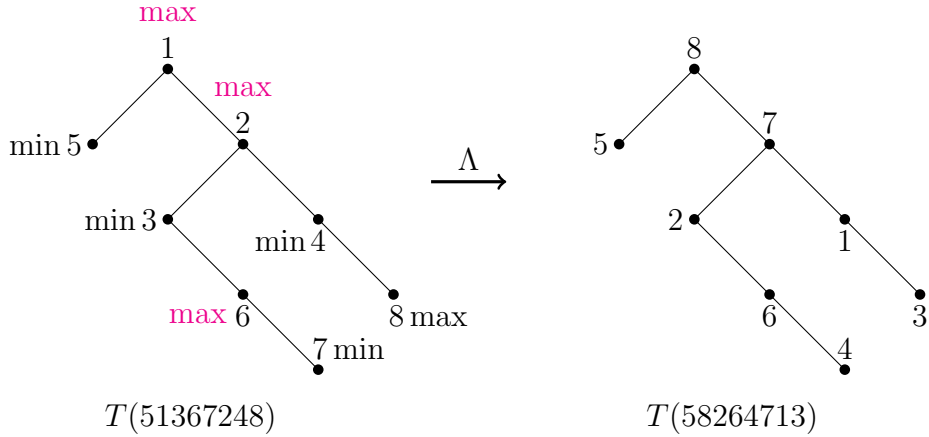
\begin{figure}
  \centering
\begin{tikzpicture}
\draw (0,0)--(-1,-1);
\draw (0,0)--(1,-1)--(2,-2)--(3,-3);\fill (3,-3) circle (2pt);\node[below] at (3.46,-3) {$8\,\max$};
\draw (1,-1)--(0,-2)--(1,-3)--(2,-4);
\fill (0,0) circle (2pt);\fill (2,-4) circle (2pt);\node[above] at (2.4,-4) {$7\min$};
\node[above] at (0,0) {$1$};
\node[above] at (0,0.5) {$\color{magenta} \max$};
\node[left] at (-1,-1) {$\min5$};
\node[above] at (1,-1) {$2$};
\node[above] at (1,-0.5) {\magenta{$\max$}};
\node[left] at (0,-2) {$\min3$};
\node[below] at (1,-3) {$6$};
\node[below] at (0.4,-3.05) {$\color{magenta} \max$};
\node[below] at (2,-2) {$4$};
\node[below] at (1.5,-2) {$ \min$};
\fill (-1,-1) circle (2pt);
\fill (1,-1) circle (2pt);
\fill (2,-2) circle (2pt);
\fill (0,-2) circle (2pt);
\fill (1,-3) circle (2pt);
\node[above] at (0.3,-5) {$T(51367248)$};
\draw[][->,very thick](3.5,-1.5)--(4.5,-1.5) node[midway, above]{$\Lambda$};
\draw (7,0)--(6,-1);
\draw (7,0)--(8,-1)--(9,-2)--(10,-3);\fill (10,-3) circle (2pt);\node[below] at (10,-3) {$3$};
\draw (8,-1)--(7,-2)--(8,-3)--(9,-4);\fill (9,-4) circle (2pt);\node[above] at (9,-4) {$4$};
\fill (7,0) circle (2pt);
\node[above] at (7,0) {$8$};
\node[left] at (6,-1) {$5$};
\node[above] at (8,-1) {$7$};
\node[left] at (7,-2) {$2$};
\node[below] at (8,-3) {$6$};
\node[below] at (9,-2) {$1$};
\fill (6,-1) circle (2pt);
\fill (8,-1) circle (2pt);
\fill (9,-2) circle (2pt);
\fill (7,-2) circle (2pt);
\fill (8,-3) circle (2pt);
\node[above] at (7.5,-5) {$T(58264713)$};
\end{tikzpicture}
\caption{ The bijection $\Lambda$ mapping $51367248$ to $58264713$.\label{Lambd}}
\end{figure}

It remains to show that 
\begin{equation}\label{eq:lambd}
\des(\sigma)=\mix(\Lambda(\sigma)). 
\end{equation}
By the recursive definition of $\mix$, we see that $\mix(\Lambda(\sigma))$ equals the number of interior nodes with two children in $T(\Lambda(\sigma))$. As the HR-action preserves the structure of the underlying  binary trees, Eq.~\eqref{eq:lambd} then follows from Fact~\ref{Andre}. It then follows from~\eqref{eq:lambd} and the definition of $\widehat{\mathrm{drop}}$ in~\eqref{def:drop} that 
$$
\mathrm{drop}(\sigma)=\widehat{\mathrm{drop}}(\Lambda(\sigma))
$$
 for $\sigma\in\Web_n$. The new interpretation of $d_{n,i}(\a,t)$ in~\eqref{int:dni2} is an immediate consequence of the bijection $\Lambda$ and~\eqref{int:dni}. This completes the proof of Theorem~\ref{thm:pro2xz}.

\vskip0.1in
For $\sigma\in\S_n$, let $\pk(\sigma)$ be the number of peaks in $\sigma$. The following result shows that the statistics ``$\pk$'' and ``$\mix$'' have the same distribution over all permutations. 

\begin{prop}\label{prop:mix}
For $n\geq1$, we have 
\begin{equation}
\sum_{\sigma\in\S_n}t^{\pk(\sigma)}=\sum_{\sigma\in\S_n}t^{\mix(\sigma)}.
\end{equation}
\end{prop}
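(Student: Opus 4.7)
My plan is to show both sides of the claimed identity equal the polynomial $\sum_{k\ge 0} d_{n,k}\cdot 2^{n-1-k}\,t^k$, where $d_{n,k}$ is the number of Andr\'e permutations of $[n]$ with $k$ descents.

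For the $\mix$ side, the recursive definition of $\mix$ read directly off the min-max tree construction shows that $\mix(\sigma)$ equals the number of interior nodes with two children in $T(\sigma)$. The Hetyei--Reiner action preserves the underlying tree shape, and each orbit contains exactly one Andr\'e permutation by Lemma~\ref{HR}. A min-max tree (in which no node has only a left successor) containing $c$ two-child nodes must have $c+1$ leaves and therefore $n-1-c$ non-leaf nodes, each of which can be independently toggled between ``min'' and ``max''. Hence each HR-orbit has size $2^{n-1-c}$, and invoking Fact~\ref{Andre} to identify $\mix$ with $\des$ on the Andr\'e representative of each orbit yields
\begin{equation*}
\sum_{\sigma\in\S_n} t^{\mix(\sigma)} \;=\; \sum_{\alpha\in\mathcal{A}_n}2^{n-1-\des(\alpha)}\,t^{\des(\alpha)} \;=\; \sum_k d_{n,k}\cdot 2^{n-1-k}\,t^k.
\end{equation*}

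For the $\pk$ side, the identity $P_n(t):=\sum_{\sigma\in\S_n} t^{\pk(\sigma)} = \sum_k d_{n,k}\cdot 2^{n-1-k}\,t^k$ is classical. One route is to combine Foata--Sch\"utzenberger's $\gamma$-expansion $A_n(x)=\sum_k 2^k d_{n,k}\,x^k(1+x)^{n-1-2k}$ of the Eulerian polynomial with the classical peak--Eulerian relation $A_n(x) = \bigl(\tfrac{1+x}{2}\bigr)^{n-1} P_n\bigl(\tfrac{4x}{(1+x)^2}\bigr)$: setting $t=4x/(1+x)^2$ and simplifying directly yields the desired formula for $P_n(t)$. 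A more self-contained alternative is to invoke Br\"and\'en's valley-hopping action on $\S_n$, which preserves the peak set (with orbits of size $2^{n-1-2\pk(\sigma)}$, using that the number of valleys equals $\pk+1$, and with unique canonical representative $\tau\in\widetilde\S_n$ satisfying $\pk(\tau)=\des(\tau)$), together with the auxiliary identity $\#\{\tau\in\widetilde\S_n : \des(\tau)=k\} = 2^k d_{n,k}$ (which follows from the uniqueness of the $\gamma$-coefficients of $A_n(x)$). Either route forces both generating functions to agree, completing the proof. The principal delicate point is invoking the peak--Eulerian relation or the valley-hopping orbit structure cleanly; both are standard and can be cited from the literature.
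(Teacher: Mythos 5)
Your proof is correct and takes essentially the same route as the paper: the $\mix$ side is handled via the Hetyei--Reiner action on min-max trees (orbit size $2^{n-1-\deg_2}$ with a unique André/increasing-tree representative, exactly the paper's ``same discussion with FS replaced by HR''), and the $\pk$ side is Brändén's valley-hopping result, with the two distributions matched through the $\gamma$-coefficients $2^k d_{n,k}$ of $A_n(x)$. The only cosmetic difference is that you equate both sides to $\sum_k d_{n,k}2^{n-1-k}t^k$ directly, whereas the paper writes both as $\gamma$-expansions of the Eulerian polynomial and invokes uniqueness.
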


\begin{proof}
On the one hand, Br\"and\'en~\cite[Corollary~3.2]{Br} proved that 
\begin{equation}\label{eq:Br}
A_n(t)=\sum_{i=0}^{\lfloor(n-1)/2\rfloor}2^{2i+1-n}|\{\sigma\in\S_n:\pk(\sigma)=i\}|x^i(1+x)^{n-1-2i}.
\end{equation}
On the other hand, if we let $\deg_i(T)$, $1\leq i\leq 2$, denote the number of nodes in a binary tree $T$ with $i$ children, then 
$$
\deg_1(T)=n-1-2\deg_2(T)
$$
follows by induction on the size of $T$. Then, by Fact~\#2 in~\cite[Section~1.6.3]{St} and the same discussions as the proof of~\eqref{eq:Br} in~\cite[Corollary~3.2]{Br} (with FS-action replaced by HR-action), we get
\begin{equation}\label{eq:Br2}
A_n(t)=\sum_{i=0}^{\lfloor(n-1)/2\rfloor}2^{2i+1-n}|\{\sigma\in\S_n:\deg_2(T(\sigma))=i\}|x^i(1+x)^{n-1-2i}.
\end{equation}
As $\deg_2(T(\sigma))=\mix(\sigma)$, the desired result then follows by comparing~\eqref{eq:Br} with~\eqref{eq:Br2}.   
\end{proof}

The permutation statistic ``$\pk$'' has already  been well studied in the literature (see~\cite{CF,DL,Dum2,Fu,GZ,MQYY,MY}) and it would be interesting to carry out  similar investigation on ``$\mix$'', in view of Proposition~\ref{prop:mix}. It would also be interesting to calculate the joint distribution of ``$\pk$'' and ``$\mix$'' over permutations.

\section*{Acknowledgements}

This work was supported by the National Science Foundation of China (grants 12271301, 12322115 \&  12201468) and the Fundamental Research Funds for the Central Universities.



\begin{thebibliography}{99}


\bibitem{Ang}  A. Bigeni, A generalization of the Kreweras triangle through the universal sl$_2$ weight system, J. Combin. Theory Ser. A, {\bf161} (2019), 309--326.

\bibitem{Br} P. Br\"and\'en, Actions on permutations and unimodality of descent polynomials, European J. Combin., {\bf29} (2008), 514--531.
  
\bibitem{CS} L. Carlitz and R. Scoville, Generalized Eulerian numbers: combinatorial applications, J. Reine Angew. Math., {\bf265} (1974), 110--137.

 \bibitem{CS77} L. Carlitz and R. Scoville, Some permutation problems, J. Combin. Theory Ser. A, \textbf{22} (1977), 129--145.
 
 \bibitem{CF} W.Y.C. Chen and A.M. Fu, A grammatical calculus for peaks and runs of permutations, J. Algebraic Combin., {\bf57} (2023),  1139--1162.

\bibitem{DE11}
E. Deutsch and S. Elizalde, Cycle up-down permutations, Australasian J. Combin, {\bf50} (2011), 187--199.

\bibitem{DL}Y. Dong and Z. Lin, Counting and signed counting permutations by descent-based statistics, J. Algebraic Comb., {\bf60} (2024), 145--189.

\bibitem{DLP} Y. Dong, Z. Lin and Q. Pan, A $q$-analog of the Stirling-Eulerian polynomials,  Ramanujan J., {\bf65} (2024), 1295--1311.

\bibitem{Dum2} D. Dumont, A combinatorial interpretation for the schett recurrence on the Jacobian elliptic functions, Math. Comp., {\bf33} (1979),  1293--1297.

\bibitem{Dum} D. Dumont, Further triangles of Seidel-Arnold type and continued fractions related to Euler and Springer numbers, Adv. Appl. Math., {\bf16} (1995), 275--296.

\bibitem{RE} R. Ehrenborg and E. Steingrímsson, Yet another triangle for the Genocchi numbers, European J. Combin., {\bf21} (2000),  593--600.

\bibitem{Fei} E. Feigin, Degenerate flag varieties and the median Genocchi numbers, Math. Res. Lett., {\bf18} (2011), 1163--1178.


\bibitem{F-H14}
D. Foata and G.-N. Han, Andr\'e permutation calculus: a twin Seidel matrix sequence, S\'em. Lothar. Combin., \textbf{73} (2016), Article  B73e.


\bibitem{FS-70} D. Foata and M.P. Sch\"utzenberger,
	{\em Th\'eorie g\'eom\'etrique des polyn\^omes
	eul\'eriens}, Lecture Notes in Math., Vol. 138,
	Springer, Berlin, (1970).
	

\bibitem{SF73} D. Foata and M.-P. Sch\"utzenberger, Nombres d'Euler et permutations alternantes, in A Survey of Combinatorial Theory, J.N. Srivistava, et al., eds., North-Holland, Amsterdam, 1973, pp.~173--187.

\bibitem{FS74} D. Foata and V. Strehl, Rearrangements of the symmetric group and enumerative properties of the tangent and secant numbers, Math. Z., {\bf137} (1974), 257--264.

\bibitem{Fu} A.M. Fu, A context-free grammar for peaks and double descents of permutations, Adv. in Appl. Math., {\bf100} (2018), 179--196.

\bibitem{FLS} S. Fu, Z. Lin and Z.-W. Sun, Permanent identities, combinatorial sequences, and permutation statistics, Adv. Appl. Math., {\bf163} (2025), Article 102789.

\bibitem{GZ}  I.M. Gessel and Y. Zhuang, Two-sided permutation statistics via symmetric functions, Forum Math., Sigma, {\bf12} (2024), e93: 1--37.

\bibitem{HJO} B-H. Hwang, J. Jang and J. Oh, A combinatorial model for the transition matrix between the Specht and SL$_2$-web bases, Forum of Math., Sigma, {\bf11} (2023), e82: 1--17.

\bibitem{HR98} G. Hetyei and E. Reiner, Permutation trees and variation statistics, European J. Combin., {\bf19} (1998), 847--866.

\bibitem{Ji23} K.Q. Ji, The $(\alpha,\beta)$-Eulerian Polynomials and Descent-Stirling Statistics on Permutations, \href{https://arxiv.org/pdf/2310.01053}{arXiv:2310.01053}.

\bibitem{JL} K.Q. Ji and Z. Lin,  The binomial-Stirling-Eulerian polynomials, European J. Combin., \textbf{120} (2024), Article  103962.



  \bibitem{MQYY} S.-M. Ma, H. Qi, J. Yeh and Y.-N. Yeh, On the joint distributions of succession and Eulerian statistics, Adv. in Appl. Math., {\bf162} (2025), Article 102772.
  
  \bibitem{MY} S.-M. Ma and Y.-N. Yeh, The peak statistics on simsun permutations, Electron. J. Combin.,  {\bf23} (2016), \#P2.14.

\bibitem{TN16} T. Nakamigawa, Expansions of a chord diagram and alternating permutations, Electron. J. Combin., {\bf23} (2016), \#P1.7.

\bibitem{TN} T. Nakamigawa, The expansion of a chord diagram and the Genocchi numbers, Ars Math. Contemp., {\bf18} (2020),  381--391.

\bibitem{PZ} Q. Pan and J. Zeng, Br\"and\'en's $(p,q)$-Eulerian polynomials, Andr\'e permutations and continued fractions, J. Combin. Theory Ser. A, {\bf181} (2021), Article 105445.

\bibitem{PZ2} Q. Pan and J. Zeng, Cycles of even-odd drop permutations and continued fractions of Genocchi numbers, J. Combin. Theory Ser. A, {\bf199} (2023), Article  105778. 


 \bibitem{Sei77} L. Seidel, {\"U}ber eine einfache Enstehungsweise der Bernoullischen Zahlen und einiger verwandten Reihen, Sitzungberichte der M\"unch, Akad. Math. Phys. Classe, 1877, pp.~157--187. 
 
\bibitem{St} R.P.  Stanley,  {\em Enumerative Combinatorics, vol. 1, 2nd edition}, Cambridge University Press, Cambridge, 2012.

\bibitem{Xu} C. Xu and J. Zeng, Gamma positivity of variations of $(\alpha,t)$-Eulerian polynomials, \href{https://arxiv.org/abs/2404.08470}{arXiv:2404.08470.}








\end{thebibliography}
\end{document}